\numberwithin{equation}{section}
\numberwithin{figure}{section}
\newtheorem{theorem}{Theorem}[section]
\newtheorem{lemma}[theorem]{Lemma}
\newtheorem{proposition}[theorem]{Proposition}
\newtheorem{remark}[theorem]{Remark}
\newtheorem{example}[theorem]{Example}
\theoremstyle{definition}
\newtheorem{definition}[theorem]{Definition}
\newcommand{\C}{{\mathbb{C}}}
\newcommand{\Z}{{\mathbb{Z}}}
\newcommand{\B}{\mathcal{B}}
\newcommand{\fg}{\mathfrak{g}}
\DeclareMathOperator{\Par}{Par}
\DeclareMathOperator{\Span}{span}
\DeclareMathOperator{\im}{im}
\DeclareMathOperator{\Flags}{\mathcal{F}lags}
\definecolor{gold}{rgb}{0.85,.66,0}
\definecolor{cherry}{rgb}{0.9,.1,.2}
\definecolor{burgundy}{rgb}{0.8,.2,.2}
\definecolor{orangered}{rgb}{0.85,.3,0}
\definecolor{orange}{rgb}{0.85,.4,0}
\definecolor{olive}{rgb}{.45,.4,0}
\definecolor{lime}{rgb}{.6,.9,0}
\definecolor{green}{rgb}{.2,.7,0}
\definecolor{grey}{rgb}{.4,.4,.2}
\definecolor{brown}{rgb}{.4,.3,.1}
\DeclareMathOperator{\inv}{inv}
\DeclareMathOperator{\Hess}{\mathcal{H}ess}
\DeclareMathOperator{\RS}{RS}
\DeclareMathOperator{\std}{std}
\begin{document}

\title[Hessenberg varieties associated to ad-nilpotent ideals]{Hessenberg varieties associated to ad-nilpotent ideals}

\author{Caleb Ji}
\address{Department of Mathematics\\ Columbia University in the City of New York \\ U.S.A. }
\email{caleb.ji@columbia.edu}

\author{Martha Precup}
\address{Department of Mathematics and Statistics\\ Washington University in St. Louis \\ One Brookings Drive \\ St. Louis, Missouri  63130 \\ U.S.A. }
\email{martha.precup@wustl.edu}

%


\begin{abstract} We consider Hessenberg varieties in the flag variety of $GL_n(\C)$ with the property that the corresponding Hessenberg function defines an ad-nilpotent ideal.  Each such Hessenberg variety is contained in a Springer fiber. We extend a theorem of Tymoczko to this setting, showing that these varieties have an affine paving obtained by intersecting with Schubert cells.  Our method of proof constructs an an affine paving for each Springer fiber that restricts to an affine paving of the Hessenberg variety.  We use the combinatorial properties of this paving to prove that Hessenberg varieties of this kind are connected.
\end{abstract}

\maketitle

\section{Introduction}
This paper studies topological and combinatorial properties of a certain class of Hessenberg varieties.  Hessenberg varieties, as introduced in \cite{DPS1992}, are subvarieties of the flag variety.  They are important examples of varieties whose geometry and topology can be characterized using combinatorial techniques (see, for example,  \cite{AHHM2014, Harada-Tymoczko2017,Harada-Precup2017}).  The Hessenberg variety $\Hess(X,h)$ is parametrized by two pieces of data: a matrix $X\in \mathfrak{gl}_n(\C)$ and a non-decreasing function $h:[n]\rightarrow [n]$, known as a Hessenberg function.  

Most of the existing literature on Hessenberg varieties considers only Hessenberg functions with the property that $h(i)\geq i$ for all $i$.  Tymoczko has shown that the Hessenberg varieties corresponding to such Hessenberg functions have a paving by affines \cite{Tymoczko2006}.   
This paper investigates Hessenberg varieties corresponding to Hessenberg functions such that $h(i)<i$ for all $i$.  In this case, the Hessenberg space of the function $h$  is an ad-nilpotent ideal and $\Hess(X,h)$ is a subvariety of the Springer fiber for $X$.  We construct an affine paving for these varieties and explore additional geometric and combinatorial properties. 

The fact that Hessenberg varieties of this kind are paved by affines is not new; Fresse proves this statement for a more general class of Hessenberg varieties in \cite{Fresse2016}.  While the arguments used in that paper are broader in scope, they do not compute the dimension of each affine cell in the paving. Our methods are constructive and we obtain combinatorial formulas for the dimension of the cells, recovering Tymoczko's results in this setting.  In Section~\ref{sec.paving} below, we define explicit coordinates for an affine paving of the Springer fiber. We then obtain a paving of the Hessenberg variety $\Hess(X,h)$ by setting certain coordinates equal to zero; this is recorded in Theorem~\ref{thm: main thm2}.   Our arguments are of a similar flavor as those given by Spaltenstein in~\cite{Spaltenstein1976}.  

We give two applications of our results in Section~\ref{sec.applications}.  Recall that the irreducible components of the Springer fibers are in bijection with standard tableaux.  This is one of the key conclusions of Springer theory.  The Hessenberg varieties we consider here may not be equidimensional, so the cells in the affine paving of maximal dimension are not in bijection with irreducible components.  However, Theorem~\ref{thm.maximalcells} below shows that these cells are still indexed by standard tableaux.  The second main result of Section~\ref{sec.applications}, namely Theorem~\ref{thm.connected}, proves that the Hessenberg varieties we consider are always connected (in the type $A$ case).  Example~\ref{ex.TypeC} shows that this property may not be true for analogous Hessenberg varieties defined using other classical groups. 

The constructions in this paper are motivated by the goal of better understanding the geometry of the affine paving.   Determining the closure relations between cells in the paving and identifying singularities of the irreducible components of $\Hess(X,h)$ are both interesting open questions.  Even in the case of the Springer fiber, the answer to these questions is unknown, although progress has been made in special cases \cite{Fresse2010,Fung2003,Graham-Zierau2011}.  Since the Hessenberg varieties considered here are all subvarieties of a Springer fiber, a thorough study of their geometry has the potential to shed new light on these subjects. 

The organization of this paper is as follows.  In Section 2, we review necessary definitions and prior results.  In Section~\ref{sec.dimpairs} we study the notion of Hessenberg inversions, originally introduced by Tymoczko in \cite{Tymoczko2006}.   We define certain subgroups of matrices crucial to the construction of our paving in Section~\ref{sec.Bmatrices}. Our affine paving is defined in Section~\ref{sec.paving} and we prove our main result, which is Theorem~\ref{thm: main thm2}.  Finally, we explore some combinatorial properties of our construction in Section~\ref{sec.applications}.

\vspace{.1in}

\noindent \textbf{Acknowledgements.} The authors are thankful to the anonymous referee for their feedback, which improved the exposition of Section~\ref{sec.paving} below.  The second author is supported in part by NSF DMS-1954001.


\section{Preliminaries}\label{sec.preliminaries}
Let $n$ be a positive integer and $[n]$ denote the set of positive integers $\{1,2,\ldots, n\}$. We work in type A throughout (except for Example~\ref{ex.TypeC} in Section~\ref{sec.applications}), where $GL_n(\C)$ is the group of invertible $n\times n$ complex matrices and $\mathfrak{gl}_n(\C)$ is the Lie algebra of all $n\times n$ complex matrices.  Let $B$ be the Borel subgroup of $GL_n(\C)$ consisting of upper triangular matrices and $U$ be the subgroup of upper triangular matrices with diagonal entries equal to $1$. 

The Weyl group of $GL_n(\C)$ is $S_n$, which we identify with the subgroup of permutation matrices in $GL_n(\C)$.  Given $w\in S_n$, let 
\[
\inv(w):= \{(i,j)\mid i>j \textup{ and } w(i)<w(j) \}
\]
denote the set of inversions of $w$.  Note that we adopt the nonstandard notation of listing the larger number in the pair $(i,j)\in \inv(w)$ first; this simplifies our exposition below.  The Bruhat length of a permutation $w\in S_n$ is $\ell(w):= |\inv(w)|$.


\subsection{Hessenberg Varieties} The \textbf{flag variety} is the collection of all full flags in $\C^n$,
\[
\Flags(\C^n) := 
\{ V_{\bullet}=(\{0\}\subseteq V_1 \subseteq V_2\subseteq  \ldots\subseteq \C^n)  \mid  \dim_{\C}(V_i) = i \, \textup{ for all }\, i\in[n] \}. 
\]
Given a full flag $V_\bullet$, let $\{\mathbf{v}_1, \mathbf{v}_2, \ldots, \mathbf{v}_n\}$ be a basis of $\C^n$ such that  for each $i$, $\{\mathbf{v}_1, \mathbf{v}_2, \ldots, \mathbf{v}_i\}$ is a basis for $V_i$.  We denote the flag $V_\bullet$ by $V_\bullet = (\mathbf{v}_1\mid \mathbf{v}_2\mid \cdots \mid \mathbf{v}_n)$.  Let $\{\mathbf{e}_1, \mathbf{e}_2, \ldots, \mathbf{e}_n\}$ be the standard basis of $\C^n$.  The \textbf{standard flag} $E_\bullet$ is the full flag $E_\bullet = (\mathbf{e}_1\mid \mathbf{e}_2 \mid \cdots \mid \mathbf{e}_n)$.  Every flag $V_\bullet$ is of the form $gE_\bullet$ where $g\in GL_n(\C)$ such that $g\mathbf{e}_k=\mathbf{v_k}$ and $gE_\bullet:= (g\mathbf{e}_1 \mid g\mathbf{e}_2 \mid \cdots \mid g\mathbf{e}_n)$.

\begin{remark} The flag variety identifies with the homogeneous space $GL_n(\C)/B$ via the map $gB \mapsto gE_\bullet$.  In this paper, we interchange notation for the flag $gE_\bullet$ and coset $gB$ whenever it is convenient.
\end{remark}

A Hessenberg variety in $\Flags(\C^n)$ is specified by two pieces of data: a Hessenberg function and an element of $\mathfrak{gl}_n(\C)$.   A \textbf{Hessenberg function} is a function $h: [n] \to [n]$ such that $h(i)\leq h(i+1)$ for all $i \in [n-1]$. We frequently write a Hessenberg function by listing its values in sequence, i.e., $h = (h(1), h(2), \ldots, h(n))$. We now define the main objects of interest in this paper.

\begin{definition} Let $h: [n]\to [n]$ be a Hessenberg function and $X\in \mathfrak{gl}_n(\C)$.  The \textbf{Hessenberg variety} associated to $h$ and $X$ is 
\[
\Hess(X,h) := \{V_{\bullet}\in \Flags(\C^n) \mid X(V_i)\subseteq V_{h(i)} \, \textup{ for all }\, i\in [n] \}.
\]
\end{definition}

If $V_\bullet=(\mathbf{v}_1\mid \mathbf{v}_2\mid \cdots \mid \mathbf{v}_n)$ then $V_\bullet\in \Hess(X,h)$ if and only if $X\mathbf{v}_i\in \Span\{\mathbf{v}_1, \ldots, \mathbf{v}_{h(i)}\}$ for all $i\in [n]$.  When $X\in \fg$ is a nilpotent matrix and $h=(0,1,\ldots, n-2, n-1)$, the variety $\B(X, h)$ is the \textbf{Springer fiber} of $X$, which we denote by $\B^X$.

The following remark indicates that we may choose any matrix within a given conjugacy class for our computations without alternating the geometric invariants of the corresponding Hessenberg variety.

\begin{remark}\label{rem.conjugation} Given a fixed Hessenberg function $h$, we have $\Hess(X,h)\simeq \Hess(g^{-1}Xg, h)$ for all $g\in GL_n(\C)$. 
\end{remark}

Most of the existing literature on Hessenberg varieties assumes that the Hessenberg function also satisfies the condition that $h(i)\geq i$.  The main reason is that this condition on the Hessenberg function ensures $\Hess(X,h)\neq \emptyset$ for all $X\in \mathfrak{gl}_n(\C)$. One of the main purposes of this paper is to explore Hessenberg varieties corresponding to Hessenberg functions with the property that $h(i)<i$ for all $i$. This is exactly the case in which the corresponding \textbf{Hessenberg space}, defined by:
\begin{eqnarray*}
H(h): = \Span\{E_{ij}\mid i\leq h(j) \}\subseteq \mathfrak{gl}_n(\C),
\end{eqnarray*}
is an ad-nilpotent ideal (that is, its lower central series is finite).  Thus, \textbf{for the remainder of this manuscript we assume that any Hessenberg function $h:[n]\to [n]$ satisfies $h(i)<i$ for all $i\in [n]$.}

Let $\mathcal{H} : = \{h: [n]\to[n] \mid  h(i+1)\geq h(i) \textup{ and } h(i)<i \}$ denote the set of all Hessenberg functions satisfying the condition that $h(i)<i$.  There is a partial ordering on this set defined by 
\[
h_1 \preceq h_2 \Leftrightarrow  h_1(i)\leq h_2(i) \textup{ for all $i$}
\]
for $h_1,h_2\in \mathcal{H}$.  A partial order like this one is studied by Drellich in \cite{Drellich2017}.  It follows directly from the definition that if $h_1\preceq h_2$ then $\Hess(X,h_1)\subseteq \Hess(X,h_2)$ for all $X\in \mathfrak{gl}_n(\C)$.  Note that our set of Hessenberg functions contains a unique maximal element with respect to $\preceq$, namely the Hessenberg function $h=(0,1, \ldots, n-1)$. Thus $\Hess(X,h) \subseteq \B^X$ for any nilpotent matrix $X\in \mathfrak{gl}_n(\C)$ and $h\in \mathcal{H}$.

\subsection{Affine Pavings}\label{subsec: affine paving}  The first main goal of this manuscript is to demonstrate an affine paving of the Hessenberg variety $\Hess(X,h)$ obtained by intersecting with the Schubert cells.  We do so by first constructing an explicit affine paving of the Springer fiber $\B^X$.  We then prove that this paving restricts to a paving of $\Hess(X,h)$ in a natural way.   Note that it is very well known that Springer fibers are paved by affines \cite{Spaltenstein1976,Fresse2010}, and Tymoczko's results prove that such a paving can be obtained by intersecting with the Schubert cells \cite{Tymoczko2006}, so our result in that case is not new.

\begin{definition}  A \textbf{paving} of an algebraic variety $Y$ is a filtration by closed subvarieties
\[
Y_0\subset Y_1 \subset \cdots \subset Y_i \subset \cdots \subset Y_d=Y.
\]	
A paving is \textbf{affine} if every $Y_i-Y_{i-1}$ is isomorphic to a finite disjoint union of affine spaces; we calls these spaces the \textbf{affine cells} of the paving.  
\end{definition}

An affine paving allows us to compute the {Betti numbers} of an algebraic variety $Y$, as shown in~\cite[1.9.1, 19.1.11]{Fulton}.  In the statement below, $H_c^*(Y)$ denotes cohomology with compact support of the algebraic variety $Y$. 

\begin{lemma}\label{betti}  Let $Y$ be an algebraic variety with an affine paving, $Y_0\subset Y_1 \subset \cdots \subset Y_i \subset \cdots \subset Y_d=Y$.  Then the nonzero cohomology groups of $Y$ are given by $H_c^{2k}(Y)= \Z^{n_k}$ where $n_k$ denotes the number of affine components of dimension $k$. 
\end{lemma}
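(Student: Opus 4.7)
The plan is to induct on the index $i$ of the filtration, showing at each stage that $H_c^{\mathrm{odd}}(Y_i)=0$ while $H_c^{2k}(Y_i)$ is free abelian of rank $N_k^{(i)}$, the number of $k$-dimensional affine cells contained in $Y_i$. For the base case, $Y_0$ is itself a finite disjoint union of affine spaces, and the formula follows from the standard computation $H_c^j(\mathbb{A}^k)=\Z$ if $j=2k$ and $0$ otherwise, together with the additivity of $H_c^*$ over disjoint unions.

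For the inductive step, I would apply the long exact sequence in compactly supported cohomology attached to the closed-open decomposition $Y_{i-1}\hookrightarrow Y_i \hookleftarrow (Y_i\setminus Y_{i-1})$:
\[
\cdots \to H_c^{j}(Y_i\setminus Y_{i-1}) \to H_c^j(Y_i) \to H_c^j(Y_{i-1}) \to H_c^{j+1}(Y_i\setminus Y_{i-1}) \to \cdots.
\]
By hypothesis, $Y_i\setminus Y_{i-1}$ is a disjoint union of affine cells, so its compactly supported cohomology is free and concentrated in even degrees, with rank in degree $2k$ equal to the number of $k$-dimensional cells in that stratum. By the inductive hypothesis the same vanishing and freeness holds for $Y_{i-1}$. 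Since all connecting homomorphisms in the long exact sequence then go between groups of opposite parity, they must vanish, and the sequence breaks into short exact sequences
\[
0 \to H_c^{2k}(Y_i\setminus Y_{i-1}) \to H_c^{2k}(Y_i) \to H_c^{2k}(Y_{i-1}) \to 0
\]
in even degrees; the odd-degree cohomology of $Y_i$ is squeezed between zero groups and hence also vanishes. Freeness of $H_c^{2k}(Y_{i-1})$ makes each short exact sequence split, and adding the contribution of the newly attached stratum to the inductive count gives $N_k^{(i)}=N_k^{(i-1)}+(\text{number of $k$-dimensional cells in } Y_i\setminus Y_{i-1})$, as required. Setting $i=d$ produces the statement.

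Rather than a genuine obstacle, the only technical point is ensuring the long exact sequence exists in this generality: this is a standard feature of compactly supported cohomology for a closed subvariety with open complement, which needs only that each $Y_{i-1}$ be closed in $Y_i$, a condition built into the definition of a paving. Since this long exact sequence is precisely what the reference \cite[19.1.11]{Fulton} records, one could alternatively dispatch the lemma by direct citation, but the inductive argument above is what underlies that result.
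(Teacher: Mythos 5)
Your proof is correct: the closed--open long exact sequence in compactly supported cohomology, the parity argument killing the connecting maps, and the splitting by freeness constitute the standard inductive argument, and the base case and the requirement that each $Y_{i-1}$ be closed in $Y_i$ are handled properly. The paper itself gives no proof and simply cites \cite[19.1.11]{Fulton}, so your write-up supplies exactly the argument underlying that citation rather than diverging from it.
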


In this paper we apply the lemma for $Y$ a complex projective variety, so $H^*(Y)=H_c^*(Y)$.
There is a well known affine paving of $\Flags(\C^n)$ induced by the \textbf{Bruhat decomposition}: 
\[
\Flags(\C^n) = \bigsqcup_{w\in S_n} C_w \textup{ where } C_w = BwE_\bullet.
\] 
The $B$-orbit $C_w$ is called the \textbf{Schubert cell} indexed by $w\in S_n$.   It is well known that each Schubert cell is isomorphic to the subgroup $U^w:= U\cap wU^-w^{-1}$ where $U^-$ is the subgroup of lower triangular matrices with diagonal entries equal to $1$. In other words, each flag $bwE_\bullet\in C_w$ can be written uniquely as $bwE_\bullet = uwE_\bullet$ for some $u\in U^w$.  Since $U^w$ is a unipotent subgroup we have $U^w\simeq Lie(U^w)$, where $Lie(U^w)$ an affine space of dimension $\ell(w)$.  Therefore $C_w\simeq \C^{\ell(w)}$.  It is well known that $\overline{C_w} = \bigcup_{y\leq w}C_y$, where $\leq$ denotes the Bruhat order on $S_n$.  Thus, the Schubert cells are affine cells for the paving of $\Flags(\C^n)$ defined by
\[
\B_0\subset \B_1 \subset \cdots \subset \B_{\frac{n(n-1)}{2}}= \Flags(\C^n)\, \textup{ where } \, \B_i:=\bigcup_{\substack{w\in S_n\\ \ell(w)=i}} \overline{C_w}.
\]  
We prove $\Hess(X,h)$ has an affine paving by considering the intersections $C_w\cap \Hess(X,h)$.

\begin{remark}\label{rem.paveit} It follows from the discussion in \cite[\S 2.2]{Precup2013} that in order to prove $\Hess(X,h)$ is paved by affines, it suffices to prove $\Hess(X,h)\cap C_w$ is isomorphic to affine space $\C^d$ with $d\in \Z_{\geq0}$. 
\end{remark}

\subsection{Factorization}\label{subsec: factorization} 

We now describe a method for identifying a portion of any Schubert cell in $\Flags(\C^n)$ with a Schubert cell in the flag variety associated to $GL_{n-1}(\C)$, namely $\Flags(\C^{n-1})$. We view $GL_{n-1}(\C)$ as a subgroup of $GL_n(\C)$ by identifying it with its image under the map:
\begin{eqnarray}\label{eqn.induction}
GL_{n-1}(\C) \to GL_n(\C) ; \; a\mapsto \left[\begin{array}{c|c} a & 0 \\\hline 0 & 1 \end{array}\right] \textup{ for all } a\in GL_{n-1}(\C).
\end{eqnarray}
Let $U_0$ be the unipotent subgroup of $GL_{n-1}(\C)$ of upper triangular matrices with diagonal entries equal to $1$. We view $U_0$ as a subgroup of $GL_n(\C)$ via $U_0 = \{u\in U \mid u_{in}=0 \textup{ if } i\neq n\}$.  Similarly, we identify $S_{n-1}$ with the subgroup $\{\sigma\in S_n\mid \sigma(n)=n\}$ of $S_n$. 

Each permutation $w\in S_n$ can be factorized uniquely as 
\begin{eqnarray}\label{eqn: factorization}
vy   \textup{ where }v=s_{i}s_{i+1}\cdots s_{n-2}s_{n-1},\textup{ for } i=w(n) \textup{ and }  y\in S_{n-1}.
\end{eqnarray}

Here $s_j$ denotes the simple transposition swapping $j$ and $j+1$. Note that $v$ in the factorization above is called the shortest left coset representative for $w=vy$ with respect to the Young subgroup $S_{n-1} = \left< s_1, \ldots, s_{n-2} \right>$, see \cite[Proposition 2.4.4]{Bjorner-Brenti}.  In one-line notation, we have that $v$ is the permutation with the property that $v(n)=i$ and all remaining values are placed in positions $1,2,\ldots, n-1$ of the one-line notation for $v$ in increasing order; $y$ is the unique permutation with the property that $y(n)=n$ and the rest of the entries in the one-line notation for $y$ are in the same relative order as the entries of $w$.   

 The factorization given in~\eqref{eqn: factorization} satisfies the condition that $\ell(w) = \ell(v)+\ell(y)$ and:
\begin{eqnarray}\label{eqn: inversions}
\inv(w) =\inv(y) \sqcup  y^{-1}(\inv(v)).
\end{eqnarray}

\begin{example}\label{ex: inversions} Let $w=[3,4,1,2]=s_2s_3s_1s_2\in S_4$. Then $w(4)=2$ and we see that 
\[
w= v y \,\textup{ where }\, v=s_2s_3 \, \textup{ and }\, y= s_1s_2.
\]
In one-line notation, $v=[1,3,4,2]$ and $y=[2,3,1, 4]$.  We have that 
\[
\inv(w) = \{(3,2), (3,1) , (4,2), (4,1)\} 
\]
where $\inv(y) = \{(3,2), (3,1) \}$ and $y^{-1}(\inv(v)) = \{ (4,2), (4,1) \}$, confirming~\eqref{eqn: inversions}.
\end{example}

Recall that $U^w:=U\cap wU^-w^{-1}$. In the special case where $v=s_is_{i+1}\cdots s_{n-2}s_{n-1}$ for some $i\in [n]$ we have $U^{v} = \{u\in U \mid u_{kj}=0 \, \textup{ for all }\, k\neq i,\, k<j\}$, i.e., $U^{v}$ is the $i$-th row of $U$, which we denote by $U_i$.    The next lemma tells us there is a factorization of the elements of $U^w$ that is compatible with the factorization of permutations given in~\eqref{eqn: factorization} above.  This is a special case of \cite[Proposition 28.1]{HumphreysLAG}.

\begin{lemma}\label{lemma: Schubert cell decomp} Suppose $w\in S_n$ and let $w=v y$ be the factorization given in~\eqref{eqn: factorization} with $i=w(n)=v(n)$. For each $u\in U^w$ the product $uw$ can be written uniquely as 
\[
uw = u_i v u_0 y \;\textup{ for some }\; u_i\in U_i=U^v \textup{ and } u_0\in U^y.
\]
\end{lemma}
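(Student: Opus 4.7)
The plan: Since $w = vy$, right-multiplying the target equation $uw = u_i v u_0 y$ by $w^{-1} = y^{-1}v^{-1}$ produces the equivalent identity
\[
u = u_i \cdot (v u_0 v^{-1}).
\]
Thus the lemma reduces to the statement that every $u \in U^w$ factors uniquely as $u_i \cdot m$ with $u_i \in U^v = U_i$ and $m \in v U^y v^{-1}$; in other words, $U^w = U^v \cdot (v U^y v^{-1})$ with the two factors intersecting trivially.

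To establish this, I would describe $U^w$ as the subgroup generated by the root subgroups $U_{w(a),w(b)}$ for $(a, b) \in \inv(w)$, and partition these according to the inversion decomposition~\eqref{eqn: inversions}. Using $v = s_i s_{i+1}\cdots s_{n-1}$, the positions from the piece $y^{-1}(\inv(v)) = \{(n, y^{-1}(j)) : i \leq j \leq n-1\}$ are $(w(n), w(y^{-1}(j))) = (i, v(j)) = (i, j+1)$, i.e., all off-diagonal positions in row $i$; these root subgroups generate $U_i = U^v$ exactly. The remaining root subgroups are indexed by $\inv(y)$ and lie at positions $P := \{(v(y(a)), v(y(b))) : (a, b) \in \inv(y)\}$, which have both coordinates different from $i$. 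Conjugation by $v$ identifies the root subgroup at $(y(a), y(b))$ inside $U^y$ with the root subgroup at $(w(a), w(b)) \in P$, so $v U^y v^{-1}$ is generated precisely by the $P$-indexed subgroups.

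A short calculation $(v u_0 v^{-1})_{ik} = (u_0)_{n, v^{-1}(k)}$, combined with the fact that $u_0 \in U^y$ has last row $e_n^T$, shows that every element of $v U^y v^{-1}$ has its $i$-th row equal to $e_i^T$; hence $U^v \cap v U^y v^{-1} = \{e\}$. Given $u \in U^w$, I would solve for $u_i \in U_i$ column-by-column from the equations $(u_i^{-1} u)_{ik} = \delta_{ik}$ ($k > i$), which yields a unique $u_i$. The resulting $m := u_i^{-1} u$ lies in $U^w$ and has $i$-th row $e_i^T$, so its nonzero off-diagonal entries all lie in $P$. The \emph{main obstacle} is then showing that such an $m$ actually belongs to $v U^y v^{-1}$, rather than just being supported at the right positions. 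This follows from a root-closure observation $(j, k), (k, \ell) \in P \Rightarrow (j, \ell) \in P$, which is inherited from the analogous property of $\inv(y)$: the set of matrices in $U$ supported at $P$ off the diagonal is then a subgroup of the expected dimension $\ell(y)$ and so must coincide with $v U^y v^{-1}$. Once this is verified, $u_0 := v^{-1} m v \in U^y$ is uniquely determined and the factorization $uw = u_i v u_0 y$ follows.
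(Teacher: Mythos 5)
Your argument is correct, but it follows a different route from the paper: the paper gives no proof of this lemma at all, simply invoking it as a special case of \cite[Proposition 28.1]{HumphreysLAG}, i.e.\ the general fact that a length-additive factorization $w=vy$ yields $U^w=U^v\cdot\bigl(vU^yv^{-1}\bigr)$ with uniqueness. What you supply is a self-contained, type-$A$ verification of exactly that special case: reduce to $U^w=U^v\cdot(vU^yv^{-1})$ with trivial intersection, match the two factors with the two pieces of the inversion decomposition~\eqref{eqn: inversions} (the $y^{-1}(\inv(v))$ piece giving precisely the off-diagonal positions $(i,i+1),\dots,(i,n)$ of row $i$, hence $U_i=U^v$, and the $\inv(y)$ piece giving the position set $P$ avoiding row $i$), solve for $u_i$ uniquely from the row-$i$ entries, and then identify the complementary factor. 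Your steps check out: the closure property $(j,k),(k,\ell)\in P\Rightarrow(j,\ell)\in P$ does follow from the corresponding property of $\inv(y)$ (the shared coordinate forces the shared index), and the identification of $\{u\in U:\ u \text{ supported on } P\}$ with $vU^yv^{-1}$ can be seen even more directly than by your dimension-plus-subgroup argument, since conjugation by the permutation matrix $v$ just relocates matrix entries and $U^y$ is itself exactly the set of unipotent upper-triangular matrices supported on the inversion positions of $y$. The citation buys brevity and validity for arbitrary reductive groups; your argument buys an explicit entry-level description of both factors, which is very much in the spirit of the paper's subsequent computations (e.g.\ Lemma~\ref{lemma: zeros} and the factorization $g_n=u_ib_n$ used later). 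One small remark: throughout you use the description of $U^w$ as the unipotent matrices supported on the inversion positions $\{(w(a),w(b)):(a,b)\in\inv(w)\}$; this is the $\ell(w)$-dimensional group the paper actually intends (namely $U\cap wU^-w^{-1}$), even though the displayed formula in Section~\ref{sec.preliminaries} reads $U\cap wUw^{-1}$, so your reading is the right one for the lemma.
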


Lemma~\ref{lemma: Schubert cell decomp} gives us an inductive decomposition of each Schubert cell, as we now explain. 
Let $w\in S_n$ and $w=vy$ be the factorization from~\eqref{eqn: factorization}.  Given $uwE_\bullet$ with $u\in U$, by Lemma~\ref{lemma: Schubert cell decomp} we may write $uwE_\bullet = u_ivu_0yE_\bullet$ with $u_i\in U_i$ and $u_0\in U_0$. (Note that since $y(n)=n$ we have $U^y \subseteq U_0$.) We obtain an isomorphism,
\begin{eqnarray}\label{eqn: induction}
\rho: C_w \to \C^{\ell(v)} \times C_y,\; uwE_\bullet=u_ivu_0yE_{\bullet} \mapsto (u_i, u_0yE_{\bullet}')
\end{eqnarray}
where $E_\bullet':=(\mathbf{e}_1\mid \mathbf{e}_2\mid \ldots\mid \mathbf{e}_{n-1})$ is the standard flag in $\C^{n-1} = \Span\{\mathbf{e}_1, \mathbf{e}_2, \ldots, \mathbf{e}_{n-1}\}$ and we identify $U_i$ with affine space via $u_i\mapsto (u_{ij})_{j>i}$, where $u_i = I_n + \sum_{j>i} u_{ij}E_{ij}$.  We make the identification $U_i\simeq \C^{\ell(v)}$ implicitly throughout this paper.   Let 
\begin{eqnarray}\label{eqn.proj-morphism}
\rho_1: C_w \to \C^{\ell(v)} ,\; uwE_\bullet = u_ivu_0E_\bullet \mapsto u_i
\end{eqnarray}
be the map obtained from $\rho$ via composition with projection to the first factor.  Denote by $P$ the maximal standard parabolic subgroup with Levi subgroup equal to the image of~\eqref{eqn.induction}.  That is, $P=\{g\in G\mid g_{nj}=0 \textup{ for all $j<n$ }\}$.  The map $\rho_1$ can be identified with the restriction to $C_w$ of the canonical projection $G/B \to G/P$; in the notation of cosets we have $\rho_1(uwB) = uwP = u_ivP$.  Thus $\rho_1$ is a morphism of varieties and commutes with the action of $B$.


\section{Hessenberg Inversions}\label{sec.dimpairs}


Let $\lambda=(\lambda_1, \lambda_2, \ldots, \lambda_k)$ be a weak composition of $n$ and $\Par(\lambda)$ be the partition we obtain from $\lambda$ by rearranging the parts of $\lambda$ in decreasing order.  We begin by fixing an element $X_{\lambda}$ in the conjugacy class $\mathcal{O}_{\Par(\lambda)}$ of nilpotent matrices of Jordan type $\lambda$.

\begin{definition}\label{def: fixed element} Let $\lambda=(\lambda_1, \lambda_2, \ldots, \lambda_k)$ be a weak composition of $n$ drawn as a diagram, namely with $k$ rows of boxes so that the $i^{th}$ row from the top has $\lambda_i$ boxes.  The \textbf{base filling} of $\lambda$ is obtained as follows.  Fill the boxes of $\lambda$ with integers $1$ to $n$ starting at the bottom of the leftmost column and moving up the column by increments of one.  Then move to the lowest box of the next column and so on.   Denote the base filling of $\lambda$ by $R(e)$. We now define:
\begin{eqnarray}\label{eqn: X-definition}
X_\lambda := \sum_{(\ell,r)} E_{\ell r}
\end{eqnarray}
where the sum is taken over the set of all pairs $(\ell,r)$ such that $r$ labels the box directly to the right of $\ell$ in the base filling of $\lambda$.
\end{definition}

\begin{example} If $n=7$ and $\lambda=(2,3,1,1)$ then the base filling of $\lambda$ is:
\[\ytableausetup{centertableaux}
\begin{ytableau}
4 & 6 \\
3 & 5 & 7 \\
2 \\
1\\ 
\end{ytableau} 
\quad\; \textup{ and we have } \; 
X_{(2,3,1,1)} = \begin{bmatrix} 0 & 0 & 0 & 0 & 0 & 0 & 0\\ 0 & 0 & 0 & 0 & 0 & 0 & 0 \\ 0 & 0 & 0& 0 &1 & 0 & 0\\ 0 & 0 & 0 & 0 & 0 & 1 & 0\\ 0 & 0 & 0 & 0 & 0 & 0 & 1\\ 0 & 0 & 0 & 0 & 0 & 0 & 0\\ 0 & 0 & 0 & 0 & 0 & 0 & 0  \end{bmatrix}.
\]
\end{example}

For each $w\in S_n$, let $R(w)$ denote the tableau of composition shape $\lambda$ obtained by labeling the $i$-th box in the base filling of $\lambda$ by $w^{-1}(i)$.   We say that $R(w)$ is \textbf{$h$-strict} if  $\ell \leq h(r)$ whenever $\ell$ labels a box directly to the left of $r$ in $R(w)$. Let $\RS_h(\lambda)$ denote the set of all $h$-strict tableaux of composition shape $\lambda$. The set of $h$-strict tableaux determines which Schubert cells intersect the Hessenberg variety.   This is proved by Tymoczko in \cite[Theorem 7.1]{Tymoczko2006} for Hessenberg varieties associated to Hessenberg functions such that $h(i)\geq i$ for all $i$.  The proof below is the same; we give a sketch using our notation for the reader's convenience.

\begin{lemma}\label{lemma: h-strict tab} Let $w\in S_n$ and $h\in \mathcal{H}$.  Then $C_w\cap \Hess(X_\lambda, h)\neq \emptyset$ if and only if $R(w)\in \RS_h(\lambda)$.
\end{lemma}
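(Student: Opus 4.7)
The plan is to prove both directions by direct computation, reducing the problem to matrix-entry calculations that exploit the explicit structure of $X_\lambda$ in terms of the base filling together with the support constraints on elements of $U^w$.

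For the backward direction, I would show the stronger statement that the permutation flag $wE_\bullet$ itself lies in $\Hess(X_\lambda, h)$ whenever $R(w) \in \RS_h(\lambda)$. Setting $V^w_k := \Span\{\mathbf{e}_{w(1)}, \ldots, \mathbf{e}_{w(k)}\}$, the definition of $X_\lambda$ in~\eqref{eqn: X-definition} gives $X_\lambda \mathbf{e}_{w(j)} = \mathbf{e}_a$, where $a$ is the label of the box directly to the left of $w(j)$ in the base filling (or zero when no such box exists). Because the entry of $R(w)$ immediately to the left of $j$ is $w^{-1}(a)$, the $h$-strict condition $w^{-1}(a) \le h(j)$ is precisely what is needed to conclude $\mathbf{e}_a \in V^w_{h(j)} \subseteq V^w_{h(k)}$ for all $k \ge j$, whence $X_\lambda V^w_k \subseteq V^w_{h(k)}$ and $wE_\bullet \in \Hess(X_\lambda, h)$.

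For the forward direction, I would argue by contrapositive. Suppose $R(w) \notin \RS_h(\lambda)$, and fix $j$ such that $w^{-1}(a) > h(j)$, where $a$ is the label directly left of $w(j)$ in the base filling. For $u\in U^w$, the condition $uwE_\bullet \in \Hess(X_\lambda, h)$ is equivalent to $(uw)^{-1} X_\lambda (uw) \in H(h)$; extracting the $(w^{-1}(a), j)$ entry (which must vanish since $w^{-1}(a) > h(j)$) reduces this to requiring $(u^{-1} X_\lambda u)_{a, w(j)} = 0$. Expanding via $X_\lambda = \sum_{(\ell,r)} E_{\ell r}$, this entry equals
\[
\sum_{(\ell, r)} (u^{-1})_{a, \ell}\, u_{r, w(j)},
\]
where the sum is over pairs $(\ell, r)$ with $r$ directly right of $\ell$ in the base filling. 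The diagonal term $(\ell, r) = (a, w(j))$ contributes $1$; it therefore suffices to show that every other term vanishes identically on $U^w$.

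The main obstacle is to verify this vanishing, which is the central combinatorial content. A nontrivial contribution from a pair $(\ell, r) \ne (a, w(j))$ would require both $a < \ell$ (so that $(u^{-1})_{a, \ell}$ can be nonzero) and $r < w(j)$ (so that $u_{r, w(j)}$ can be nonzero). Using that labels in the base filling increase upward within each column and that all labels in column $c$ precede those in column $c+1$, I would check three cases according to the column of $r$ relative to that of $w(j)$: if $r$ lies in an earlier column, so does $\ell$, forcing $\ell < a$; if $r$ shares a column with $w(j)$, then $r < w(j)$ places $r$ lower in that column, hence $\ell$ lower than $a$ in the preceding column, again giving $\ell < a$; and if $r$ lies in a later column than $w(j)$, then $r > w(j)$ directly. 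Each case contradicts the required inequalities, so the sum collapses to $1$, and no flag in $C_w$ lies in $\Hess(X_\lambda, h)$.
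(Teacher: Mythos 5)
Your proof is correct, and its overall shape matches the paper's: the backward direction is the same computation showing $wE_\bullet\in \Hess(X_\lambda,h)$ exactly when the left-neighbor labels in $R(w)$ satisfy the $h$-strict inequality. The only real difference is in the forward direction: the paper deduces it by citing Tymoczko's result that the pivots of $u^{-1}X_\lambda u$ sit in the same positions as those of $X_\lambda$ for $u\in U^w$, whereas you re-derive the needed instance of that fact by hand, expanding the $(a,w(j))$ entry of $u^{-1}X_\lambda u$ over the pairs $(\ell,r)$ defining $X_\lambda$ and using the column structure of the base filling (labels increase up each column and across columns) to kill every cross term, leaving the contribution $1$ from $(\ell,r)=(a,w(j))$. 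Your case analysis is sound (note that uniqueness of left/right neighbors is what forces $a<\ell$ and $r<w(j)$ strictly for a non-diagonal term, which you use implicitly). The trade-off is the expected one: the paper's version is shorter by outsourcing the pivot statement, while yours is self-contained and makes explicit why conjugation by an upper-triangular unipotent cannot destroy the offending matrix entry.
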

\begin{proof}[Sketch of proof] By definition, $w E_\bullet \in \Hess(X_\lambda, h)$ if and only if 
\[
X_\lambda \mathbf{e}_{w(r)} \in \Span\{\mathbf{e}_{w(1)}, \ldots, \mathbf{e}_{w(h(r))}\}
\] 
for all $r\in [n]$.  Suppose $\ell$ labels the box directly to the left of $r$ in $R(w)$.  The $w(\ell)$ labels the box directly to the left of $w(r)$ in $R(e)$.  Since $X_\lambda\mathbf{e}_{w(r)} = \mathbf{e}_{w(\ell)}$ we therefore have $w E_\bullet \in \Hess(X_\lambda, h)$ if and only if $\ell \leq h(r)$ for any such pair $(\ell, r)$.

To complete the proof we have only to show that $\Hess(X_\lambda, h)\cap C_w\neq \emptyset$ implies $wE_\bullet\in \Hess(X_\lambda, h)$.  Assume $uwE_\bullet \in \Hess(X_\lambda, h)$ for some $u\in U^w$.  Then
\[
X_\lambda u\mathbf{e}_{w(r)} \in \Span \{u\mathbf{e}_{w(1)}, \ldots, u\mathbf{e}_{w(h(r))}\} \Leftrightarrow (u^{-1}X_\lambda u) \mathbf{e}_{w(r)} \in \Span\{\mathbf{e}_{w(1)}, \ldots, \mathbf{e}_{w(h(r))}\}
\]  
for all $r\in [n]$.  The desired statement now follows immediately from the fact that the pivots of $u^{-1}X_\lambda u$ are in the same position as the pivots of $X_\lambda$ (as proved by Tymoczko in \cite[Proposition 4.6]{Tymoczko2006}).  
\end{proof}

When $h=(0,1,\ldots, n-1)$ we have that $\RS_h(\lambda)=:\RS(\lambda)$ is the set of tableaux of composition shape $\lambda$ which are row-strict, that is, increasing across rows.  By definition, $\RS_h(\lambda)\subseteq \RS(\lambda)$ for all Hessenberg functions $h\in \mathcal{H}$. Our next definition comes from \cite[Theorem 7.1]{Tymoczko2006}, see \cite{Precup-Tymoczko2019} also.

\begin{definition}\label{defn: Hess inversions} Let $\lambda$ be a weak composition of $n$ and $k,\ell\in [n]$.   We say $(k,\ell)$ is a \textbf{Hessenberg inversion} of $R(w)$ for $w\in S_n$ if $k>\ell$ and:
\begin{enumerate}
\item $k$ occurs in a box below $\ell$ and in the same column or in any column strictly to the left of the column containing $\ell$ in $R(w)$, and
\item if the box directly to the right of $\ell$ in $R(w)$ is labeled by $r$, then $k\leq h(r)$.
\end{enumerate}
Denote the set of Hessenberg inversions in $R(w)$ by $\inv_{h,\lambda}(w)$.
\end{definition}

Note that if the pair $(k,\ell)$ satisfies condition (1), then $(k, \ell)\in \inv(w)$; so Hessenberg inversions are a subset of the inversions of $w$.

\begin{remark}\label{rem: k-inversions} If $(k, \ell_1), (k,\ell_2)\in \inv_{h,\lambda}(w)$ then $k$, $\ell_1$, and $\ell_2$ are all in different rows of $R(w)$, or equivalently, $w(k), $$w(\ell_1)$, and $w(\ell_2)$ are all in different rows of $R(e)$.  Indeed if $\ell_1$ fills a box to the left of $\ell_2$ and in the same row, the assumption that $(k,\ell_1)$ is a Hessenberg inversion implies that $k$ is less than every entry to the right of $\ell_1$, implying $(k,\ell_2)$ cannot be an inversion.  

Now suppose $\ell_1$ and $k$ occur in the same row. Since $(k,\ell_1)$ is a Hessenberg inversion, $k$ must occur to the left of $\ell_1$.  On the other hand, $R(w)$ must be row-strict and $k>\ell_1$ so we obtain a contradiction.
\end{remark}

\begin{example} Let $n=7$ and $\lambda=(2,3,1,1)$.  The tableau $R(w)$ for $w=[4,3,1,6,5,7,2]$ is shown below. 
\[\ytableausetup{centertableaux}
\begin{ytableau}
1 & 4 \\
2 & 5 & 6 \\
7 \\
3\\ 
\end{ytableau} 
\]
If $h=(0,1,2,3,4,5,6)$ then $R(w)$ has inversion set $\inv_{\lambda, h}(w)=\{ (7,6), (7,4), (5,4), (3,2), (3,1), (2,1) \}$.  Note that $(7,5)\in \inv(w)$ but $(7,5)$ is not a Hessenberg inversion since $7 \nleq 5=h(6)$.  If $h=(0,0,1,2,3,3,3)$ then the inversion set becomes $\inv_{\lambda,h}(w) = \{ (7,6), (7,4), (5,4), (3,2), (2,1)\}$ since $3\nleq 2=h(4)$ now.
\end{example}

When $h=(0,1,2,\ldots, n-1)$, we called the pairs in Definition~\ref{defn: Hess inversions} above \textbf{Springer inversions}, denoted $\inv_\lambda(w)$ in this case.  We let 
\[
\inv_{\lambda}^k(w) := \{(k,\ell) \mid 1\leq \ell <k \textup{ and } (k,\ell)\in \inv_\lambda(w)\}
\]
so $\inv_\lambda(w) = \bigsqcup_{k=2}^n \inv_{\lambda}^k(w)$. We set $d_k:= |\inv_\lambda^k(w)|$ for all $2\leq k \leq n$.

Let $w\in S_n$ such that $R(w)\in \RS(\lambda)$.  Since $R(w)$ is row-strict, the box labeled by $n$ must appear at the end of a row.  Let $\lambda'$ be the composition of $n-1$ we obtain from $\lambda$ by deleting the box labeled by $n$ in $R(w)$, or equivalently, deleting the box labeled by $i=w(n)$ in $R(e)$. Our next lemma shows that the Hessenberg inversions of $w$ are well-behaved with respect to the decomposition of $\inv(w)$ given in~\eqref{eqn: inversions}.

\begin{lemma}\label{lemma: Springer case induction} Suppose $R(w)\in \RS(\lambda)$ and $w=vy$ is the factorization from~\eqref{eqn: factorization} with $i=w(n)$. Then $R(y)\in \RS(\lambda')$ and 
\[
\inv_{\lambda'}(y) = \inv_{\lambda}(w) \setminus \inv_{\lambda}^n(w).
\]
where $\lambda'$ is the composition of $n-1$ we obtain from $\lambda$ by deleting the box labeled by $n$ in $R(w)$.
\end{lemma}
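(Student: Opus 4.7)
The plan is to realize $R(y)$ as the tableau obtained from $R(w)$ by erasing its box containing the value $n$, and then read both conclusions directly off this identification.

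First I will analyze how the base filling of $\lambda'$ is related to that of $\lambda$. Set $i = w(n)$, so the box containing $n$ in $R(w)$ sits at position $i$ of the base filling of $\lambda$; since $R(w)$ is row-strict, this position is the rightmost box of some row $a$ with $\lambda_a = c$. Because removing the rightmost box of row $a$ alters the height of no column other than $c$ (which drops by one), a direct column-by-column comparison of the bottom-to-top labeling procedure for $\lambda$ and $\lambda'$ (split into the cases of columns strictly left of, equal to, and strictly right of $c$) should yield the label shift rule: each position $(\mathrm{row},\mathrm{col})$ surviving in $\lambda'$ keeps its label $j$ if $j < i$ and is relabeled by $j - 1$ if $j > i$.

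Next, from the factorization $w = vy$ with $v = s_i s_{i+1}\cdots s_{n-1}$, a short calculation gives $y^{-1}(j) = w^{-1}(j)$ for $j < i$ and $y^{-1}(j) = w^{-1}(j+1)$ for $i \leq j \leq n-1$. Combining these formulas with the label shift rule above places the entry $w^{-1}(j)$ at the position of $R(y)$ occupying the same $(\mathrm{row},\mathrm{col})$ as position $j$ of $R(w)$, for every position except the one containing $n$. This identifies $R(y)$ with $R(w)$ with its $n$-box erased, and since deleting the last entry of a row preserves row-strictness, we conclude $R(y) \in \RS(\lambda')$.

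Finally, I would transfer the two conditions of Definition~\ref{defn: Hess inversions} box-by-box under the identification above. Condition~(1) is purely positional and thus transfers verbatim. For condition~(2), the only $\ell \in \{1,\ldots,n-1\}$ whose right neighbor in $R(w)$ differs from its right neighbor in $R(y)$ is the box $\ell_0$ directly to the left of the deleted $n$-box (when such a box exists); its right neighbor is $n$ in $R(w)$ but does not exist in $R(y)$. Since $h(n) = n-1$, the inequality $k \leq h(n)$ holds for every $k \leq n-1$, so condition~(2) is vacuously satisfied for the pair $(k,\ell_0)$ in $R(w)$, matching its vacuous validity in $R(y)$. For every other $\ell$, the right neighbor lies at the same $(\mathrm{row},\mathrm{col})$ position with the same value in both tableaux, so condition~(2) transfers verbatim. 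The pairs removed in passing from $\inv_\lambda(w)$ to $\inv_{\lambda'}(y)$ are thus exactly the Springer inversions with $k = n$, namely $\inv_\lambda^n(w)$, giving the desired equality.

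The main obstacle I anticipate is the base filling comparison, particularly when the removed box is not at the top of its column, so that column $c$ of $\lambda'$ may acquire a gap between rows above and below row $a$. Handling this requires verifying that the bottom-to-top order of the remaining rows in column $c$ agrees with the order before deletion, so that the labels in column $c$ shift down by exactly one starting at $i+1$; this, together with the observation that columns $c' > c$ undergo a global shift of $-1$ in all labels while their positions are unchanged, is what makes the label shift rule hold in full generality.
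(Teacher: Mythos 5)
Your argument is correct and follows essentially the same route as the paper: the paper's (terser) proof likewise observes that the base filling of $\lambda'$ is obtained from that of $\lambda$ by deleting the box labeled $i$ and applying $v^{-1}$ to the remaining labels, so that $R(y)$ is $R(w)$ with its $n$-box erased, from which row-strictness and the inversion identity follow. Your extra care with the label-shift rule in the column of the deleted box and with condition (2) for the box immediately left of the $n$-box (using $h(n)=n-1$) just fills in details the paper leaves implicit.
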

\begin{proof} Recall that $v$ is the permutation whose one-line notation has $i$ in the $n$-th position and all remaining entries are placed in positions $1,2,\ldots, n-1$ in increasing order.  In particular, we obtain the base filling of the composition $\lambda'$ from the base filling of $\lambda$ by deleting the box containing $i$ and applying $v^{-1}$ to the remaining entries.  It follows immediately that $R(y)$ is the tableau of composition shape $\lambda'$ we obtain by deleting the box containing $n$ from $R(w)$ so $R(y)\in \RS(\lambda')$.  Thus $\inv_{\lambda'}(y) = \sqcup_{k=2}^{n-1} \inv_{\lambda}^k(w)$ as desired.  
\end{proof}

Motivated by the inductive formula from Lemma~\ref{lemma: Springer case induction}, we let $X_{\lambda'} \in \mathfrak{gl}_{n-1}(\C)$ be the nilpotent matrix defined as in~\eqref{eqn: X-definition} for the composition $\lambda'$ of $n-1$.  The proof of the lemma implies $X_{\lambda'}$ is the matrix corresponding to the linear transformation obtained by restricting $v^{-1}X_\lambda v$ to $\C^{n-1}\simeq\Span\{\mathbf{e}_1, \mathbf{e}_2, \ldots, \mathbf{e}_{n-1}\}$.

\begin{remark}\label{rem.restriction} Let  $\B^{X_{\lambda'}}$ denote the Spring fiber in $\Flags(\C^{n-1})$ corresponding to $X_{\lambda'}\in \mathfrak{gl}_{n-1}(\C)$.
The discussion above implies that for all $u_0\in U_0$, $vu_0yE_\bullet \in \B^{X_\lambda}$ if and only if $u_0yE_\bullet' \in \B^{X_{\lambda'}}$.
\end{remark}


\section{The $B_k(w)$-subgroups} \label{sec.Bmatrices}

We now introduce a collection of subgroups of $U$ associated to each $w\in S_n$ with $R(w)\in \RS(\lambda)$.  We use these subgroups in the next section to construct an affine paving of the Springer fiber $\B^{X}$ that restricts to a paving of the subvariety $\Hess(X,h)$.  Throughout this section, let $\lambda$ be a fixed weak composition of $n$ and $X_\lambda\in \mathcal{O}_{\Par(\lambda)}$ the matrix from Definition~\ref{eqn: X-definition} above.  

Suppose $wE_\bullet \in \B^{X_\lambda}$, or equivalently by Lemma~\ref{lemma: h-strict tab}, that $R(w)\in \RS(\lambda)$.  Using the factorization from~\eqref{eqn: factorization} we write $w=vy$ for $v=s_is_{i+1}\cdots s_{n-2}s_{n-1}$ where $i=w(n)=v(n)$.  Recall the morphism $\rho_1: C_w\to \C^{\ell(v)}$ defined in~\eqref{eqn.proj-morphism}.
The next lemma tells us that if $uwE_\bullet \in \B^{X_\lambda}$, then certain entries of $u_i=\rho_1(uwE_\bullet)$ must be zero.

\begin{lemma}\label{lemma: zeros} Suppose $uwE_\bullet \in \B^{X_\lambda}$ and $\rho_1(uwE_\bullet) = u_i\in U_i$ for $i=w(n)$.  Let $u_{ij}$ for $j>i$ denote the entry in the $i$-th row and $j$-th column of $u_i$.  Then $u_{ij}=0$ unless $j$ appears at the end of a row in the base filling $R(e)$.  In particular, the morphism of varieties $\rho_1: C_w\to \C^{\ell(v)}$ defined in~\eqref{eqn.proj-morphism} restricts to a morphism
\begin{eqnarray}\label{eqn.proj-morphism2}
\rho_1^{(\lambda)} : C_w\cap \B^{X_\lambda} \to \C^{d_n}
\end{eqnarray}
where $d_n:=|\inv_\lambda^n(w)|$.
\end{lemma}
\begin{proof} By Lemma~\ref{lemma: Schubert cell decomp} we may write $uwE_\bullet = u_ivu_0yE_\bullet$ for some $u_0\in U_0$ and $u_i = \rho_1(uwE_\bullet)$. Suppose $j>i$ does not fill a box at the end of a row in the base filling $R(e)$ of $\lambda$.  This implies $\mathbf{e}_j\in \im(X_\lambda)$.  Let $V_\bullet=(\mathbf{v}_1\mid \mathbf{v}_2\mid \ldots\mid \mathbf{v}_n)$ where $\mathbf{v}_k = uw(\mathbf{e}_k)$.  Since $V_\bullet \in \B^{X_\lambda}$ we must have $\im(X_\lambda) \subseteq V_{n-1}$ and so $\mathbf{e}_j\in \Span\{\mathbf{v}_1, \ldots, \mathbf{v}_{n-1}\}$. Thus $\mathbf{e}_j=\sum_{k=1}^{n-1} c_k \mathbf{v}_k$ for some $c_1, \ldots, c_{n-1}\in \C$.   Applying $(u_iv)^{-1}$ to both sides we obtain
\begin{eqnarray*}
v^{-1}(\mathbf{e}_{j}-u_{ij}\mathbf{e}_i)=  \sum_{k=1}^{n-1} c_ku_0y(\mathbf{e}_k) &\Rightarrow& \mathbf{e}_{v^{-1}(j)} -u_{ij} \mathbf{e}_n = \sum_{k=1}^{n-1} c_ku_0\mathbf{e}_{y(k)}\\
&\Rightarrow& -u_{ij}\mathbf{e}_n = -\mathbf{e}_{v^{-1}(j)} + \sum_{k=1}^{n-1} c_ku_0\mathbf{e}_{y(k)}
\end{eqnarray*}
The RHS of the above equation is in $\Span\{\mathbf{e}_1, \ldots, \mathbf{e}_{n-1}\}$, implying $u_{ij}=0$ as desired.  Finally, we note that $j>i$ and $j$ appears at the end of a row in the base filling $R(e)$ if and only if $(w^{-1}(i), w^{-1}(j)) = (n, w^{-1}(j))$ is a Springer inversion of $R(w)$ in $\inv_\lambda^n(w)$.  This shows 
\[
\inv_\lambda^n(w) = \{ (n, w^{-1}(j))  \mid i<j \, \textup{ and $j$ labels a box at the end of a row in $R(e)$}   \}
\]  
Thus if $uwE_\bullet\in \B^{X_\lambda}$ we get 
\[
\rho_1(uwB) =u_i = I_n + \sum_{\substack{i<j\\ (n,w^{-1}(j)) \in \inv_\lambda^n(w)}} u_{ij}E_{ij}.
\] 
This yields the description of the restriction of $\rho_1$ to $C_w\cap \B^{X_\lambda}$ in~\eqref{eqn.proj-morphism2}.
\end{proof}

The goal of the next section is to construct a generic element of $C_w\cap \B^{X_\lambda}$ whenever this intersection is nonempty. We do so by introducing a collection of subgroups of $U$ associated to each $R(w)\in \RS(\lambda)$.  Recall that $\inv_\lambda(w)$ denotes the set of Hessenberg inversions corresponding to $h=(0,1,2,\ldots, n-1)$, namely the Springer inversions.

\begin{definition}\label{defn: B-matrices} Let $2\leq k \leq n$.  We define $B_k(w)$ to be the set of all matrices $g_k$ such that:
\begin{enumerate}
\item if $(k,\ell)\in \inv_{\lambda}^k(w)$ and $\mathbf{e}_{w(j)} = X_\lambda^{m}\mathbf{e}_{w(\ell)}$ for some $m\in \Z_{\geq0}$ then 
\[
g_k\mathbf{e}_{w(j)} = \mathbf{e}_{w(j)}+x_{w(k)w(\ell)}X_\lambda^m\textbf{e}_{w(k)} \textup{ for some $x_{w(k)w(\ell)}\in \C$},
\]
 \item and $g_k \mathbf{e}_{w(j)} = \mathbf{e}_{w(j)}$ otherwise. 
 \end{enumerate}
\end{definition}

From the definition above, we see that each element  $g_k$ of $B_k(w)$ is uniquely determined by the values of $(x_{w(k) w(\ell_1)} , \ldots, x_{w(k) w(\ell_d)})$ for $\inv_\lambda^k(w) = \{(k,\ell_1),  \ldots, (k,\ell_d)\}$ and $d=d_k$. To emphasize this, we sometimes write $g_k = g_k(x_{w(k)w(\ell_1)} , \ldots, x_{w(k)w(\ell_d)})$ and say that $(x_{w(k) w(\ell_1)} , \ldots, x_{w(k) w(\ell_d)})$ are the coordinates of $g_k$.

\begin{example} Let $n=7$ and $\lambda=(3,2,2)$.  We consider $w=[3,2,6,1,7,4,5]$ with corresponding tableau $R(w)\in \RS(\lambda)$ as shown below; the base filling $R(e)$ is also below.
\[\ytableausetup{centertableaux}
R(w) =
\begin{ytableau}
1 & 3 & 5 \\
2 & 7 \\
4 & 6 
\end{ytableau} 
\quad\quad\quad
R(e) =  \begin{ytableau}
3 & 6 & 7 \\
2 & 5 \\
1 & 4 
\end{ytableau} 
\]
In this case, $\inv_{\lambda}(w) = \{ (7,5), (6,5), (4,2), (4,3), (2,1) \}$ so, in particular, $\inv_{\lambda}^4(w) = \{ (4,2), (4,3) \}$.  We have $(w(4), w(2)) = (1,2)$ and $(w(4), w(3)) = (1,6)$.  Since $X_\lambda \mathbf{e}_{w(4)} = \mathbf{0}$, the elements of $B_4(w)$ are matrices of the form
\[
I_7 + x_{12}E_{12} + x_{16}E_{16}
\textup{ where } x_{12}, x_{16}\in \C.
\]
For another example, consider $\inv_\lambda^6(w)=\{(6,5)\}$; we have $(w(6), w(5)) = (4,7)$.  In this case, $X_\lambda \mathbf{e}_{w(6)}=X_\lambda \mathbf{e}_4 = \mathbf{e}_1$ and $X_\lambda^2\mathbf{e}_{w(6)} = X^2 \mathbf{e}_4=\mathbf{0}$.  Therefore the elements of $B_6(w)$ are matrices of the form
\[ 
I_7+x_{47}E_{47}+x_{47}E_{16}
\textup{ where } x_{47}\in \C.
\]
\end{example}

Let $2\leq k \leq n$ and $g_k\in B_k(w)$. Then $(g_k)_{aa}=1$ for all $a\in [n]$.  Suppose $(g_k)_{ab}\neq0$ for $a\neq b$. By definition, $(a,b) = (w(\ell'), w(k'))$ where $\ell',k'\in[n]$ such that $X^m e_{w(\ell)} = e_{w(\ell')}$ and $X^me_{w(k)} = e_{w(k')}$ for some $m\in \Z_{\geq0}$.  Since the action of $X_\lambda$ on the standard basis vectors is determined by the base filling $R(e)\in \RS(\lambda)$, it follows that $w(k')$ fills the $m$-th box to the left of $w(k)$ in $R(e)$ and $w(\ell')$ fills the $m$-th box to the left of $w(\ell)$ in $R(e)$.  Equivalently, $k'$ fills the $m$-th box to the left of $k$ in $R(w)$ and $\ell'$ fills the $m$-th box to the left of $\ell$ in $R(w)$.   

Since $(k,\ell)\in \inv_\lambda(w)$, we know that the box labeled by $k$ appears below the box labeled by $\ell$ and in the same column or in any column strictly to the left of $\ell$ in $R(w)$.  Therefore the same must be true of the pair $(k',\ell')$, i.e., the box labeled by $k'$ appears below the box labeled by $\ell'$ and in the same column or in any column strictly to the left of $\ell'$ in $R(w)$; and similarly for the boxes labeled by $w(k')$ and $w(\ell')$ in $R(e)$. By definition of the base filling $R(e)$, we conclude that $a=w(k')<w(\ell')=b$ so $g_k\in U$.

We summarize the discussion above in the following remark.

\begin{remark} Let $2\leq k \leq n$ and $w\in S_n$ such that such that $R(w)\in \RS(\lambda)$.  Then $B_k(w)\subseteq U$ and furthermore, given $g_k\in B_k$ we have $(g_k)_{ab}\neq 0$ if and only if $a=b$ \textup{(}in which case $(g_k)_{aa}=1$\textup{)} or $(a,b) = (w(\ell'), w(k'))$ where $k'$ fills the $m$-th box to the left of $k$ in $R(w)$ and $\ell'$ fills the $m$-th box to the left of $\ell$ in $R(w)$ for some $m\in \Z_{\geq 0}$ and $(k,\ell)\in \inv_\lambda^k(w)$.  
\end{remark}

The remainder of this section contains results describing the structure of the matrices in $B_k(w)$; in most cases, our proofs consist of straightforward computations using linear algebra.

\begin{lemma}\label{lemma: abelian subgroup} Let $2\leq k \leq n$ and $w\in S_n$ such that $R(w)\in \RS(\lambda)$.  The set of matrices $B_k(w)$ from Definition~\ref{defn: B-matrices} is an abelian subgroup of $U$ of dimension $|\inv_\lambda^k(w)|$.  
\end{lemma}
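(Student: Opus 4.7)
The plan is to reduce the proof to the single identity $NN' = 0$, where $N := g_k - I$ and $N' := g_k' - I$ for arbitrary $g_k, g_k' \in B_k(w)$. Granted this identity (and its symmetric counterpart $N'N = 0$), both products $g_kg_k'$ and $g_k'g_k$ equal $I + N + N'$, which is visibly symmetric in $N, N'$, gives closure under multiplication, and identifies the coordinates of $g_kg_k'$ with the coordinatewise sum of those of $g_k$ and $g_k'$. The identity $I$ arises as the zero coordinate vector, and since $N^2 = 0$ by the same argument, $g_k^{-1} = I - N \in B_k(w)$ with coordinates $-x_\bullet$; thus $B_k(w)$ is an abelian subgroup, and the coordinate parametrization is a group homomorphism from $(\C^{d_k}, +)$.

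To establish $NN' = 0$, I will analyze supports using the remark immediately preceding the statement. The nonzero off-diagonal entries of $N$ (and, by the same description, of $N'$) sit at positions $(a,c) = (w(k'), w(\ell'))$ where $k'$ is the $m$-th left-shift of $k$ in $R(w)$ and $\ell'$ is the $m$-th left-shift of some $\ell$ with $(k,\ell) \in \inv_\lambda^k(w)$, for some $m \geq 0$. For the sum $(NN')_{ab} = \sum_c N_{ac}N'_{cb}$ to contain a nonzero term, the summation index must satisfy $c = w(\ell') = w(k'')$ for some such $\ell'$ and $k''$; equivalently, $\ell' = k''$ as boxes in $R(w)$. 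But $\ell'$ sits in the row of $\ell$ and $k''$ sits in the row of $k$, and Remark~\ref{rem: k-inversions} forces these rows to be distinct. The resulting contradiction gives $NN' = 0$, and $N'N = 0$ by symmetry.

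For the dimension claim, I will verify that the assignment $(x_{w(k)w(\ell_i)})_{i=1}^{d_k} \mapsto g_k$ is a bijection $\C^{d_k} \to B_k(w)$. Injectivity is immediate because $x_{w(k)w(\ell_i)}$ literally appears as the $(w(k), w(\ell_i))$-entry of $g_k$ (the $m=0$ case of the definition). The only real subtlety, and what I expect to be the main obstacle, is well-definedness: if a single basis vector $\mathbf{e}_{w(j)}$ admitted two expressions $X_\lambda^{m_1}\mathbf{e}_{w(\ell_1)} = X_\lambda^{m_2}\mathbf{e}_{w(\ell_2)}$ for distinct $(k,\ell_i) \in \inv_\lambda^k(w)$, the two coordinate-dependent prescriptions for $g_k\mathbf{e}_{w(j)}$ might conflict. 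However, any such coincidence would force $w(\ell_1)$ and $w(\ell_2)$ into a common row of $R(e)$, equivalently $\ell_1$ and $\ell_2$ into the same row of $R(w)$, which is again forbidden by Remark~\ref{rem: k-inversions}. Once this combinatorial bookkeeping is in place, the remaining verifications reduce to elementary linear algebra.
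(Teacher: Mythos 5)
Your proposal is correct and is essentially the paper's argument in different packaging: the identity $(g_k-I)(g_k'-I)=0$ is exactly the paper's key observation that each element of $B_k(w)$ fixes the vectors $X_\lambda^m\mathbf{e}_{w(k)}$, and both rest on the same input from Remark~\ref{rem: k-inversions} (the rows of $k$ and of the $\ell$'s are pairwise distinct), yielding the same conclusion that multiplication adds coordinates, whence the abelian group structure and dimension $|\inv_\lambda^k(w)|$. Your extra check that the defining prescription of $g_k$ is unambiguous is a nice point of care, but it follows from the same remark and does not change the route.
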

\begin{proof} Let $g_k, h_k\in B_k(w)$, where $g_k$ has coordinates $(x_{w(k)w(\ell_1)} , \ldots, x_{w(k)w(\ell_d)})$ and $h_k$ has coordinates $(y_{w(k)w(\ell_1)} , \ldots, y_{w(k)w(\ell_d)})$ for $d=d_k$.  We will prove $g_kh_k$ is the matrix in $B_k(w)$ with coordinates $(x_{w(k)w(\ell_1)}+y_{w(k)w(\ell_1)} , \ldots, x_{w(k)w(\ell_d)}+y_{w(k)w(\ell_d)})$.  In other words, we prove that if $(k,\ell)\in \inv_\lambda^k(w)$ and $\mathbf{e}_{w(j)} = X_\lambda^m \mathbf{e}_{w(\ell)}$ for some $m\in \Z_{\geq0}$, then 
\begin{eqnarray} \label{eqn: composition formula}
(g_kh_k)\mathbf{e}_{w(j)} = \mathbf{e}_{w(j)} + (x_{w(k)w(\ell)}+y_{w(k)w(\ell)})X_\lambda^m \mathbf{e}_{w(k)} 
\end{eqnarray}
and $(g_kh_k)\mathbf{e}_{w(j)}=\mathbf{e}_{w(j)}$ otherwise.  It follows directly from this formula that $B_k(w)$ is an abelian subgroup of $U$.

Now consider the action of $g_kh_k$ on any basis element $\mathbf{e}_{w(j)}$.  If $w(j)$ does not appear in $R(e)$ to the left and in the same row as any $w(\ell)$ for $(k,\ell)\in \inv^k_\lambda(w)$, that is, if $\mathbf{e}_{w(j)}\neq X_\lambda^m\mathbf{e}_{w(\ell)}$ for some $m\in \Z_{\geq0}$, then both $g_k$ and $h_k$ map $\mathbf{e}_{w(j)}$ to itself and $(g_kh_k)\mathbf{e}_{w(j)}=\mathbf{e}_{w(j)}$.

If $w(j)$ is to the left and in the same row of $R(e)$ as $w(\ell)$ for some $(k,\ell)\in \inv_\lambda^k(w)$, then $g_k \mathbf{e}_{w(j)} = \mathbf{e}_{w(j)} + x_{w(k) w(\ell)}X_\lambda^m\mathbf{e}_{w(k)}$ and $h_k\mathbf{e}_{w(j)} = \mathbf{e}_{w(j)} + y_{w(k)w(\ell)}X_\lambda^m \mathbf{e}_{w(k)}$ for some $m\in\Z_{\geq 0}$. Write $X_\lambda^m \mathbf{e}_{w(k)}= \mathbf{e}_{w(k')}$ where $w(k')$ labels the $m$-th box to the left of $w(k)$ in $R(e)$.  Then $k'$ labels the $m$-th box to the left of $k$ in $R(w)$ and hence $k'$ is not in the same row as $\ell$ for any $(k,\ell)\in \inv^k_\lambda(w)$ by Remark~\ref{rem: k-inversions}.  This implies $g_k\mathbf{e}_{w(k')} = \mathbf{e}_{w(k')} = h_k\mathbf{e}_{w(k')}$ and formula~\eqref{eqn: composition formula} now follows.  The assertion that the dimension of this subgroup is $|\inv_\lambda^k(w)|$ is clear from the definition.
\end{proof}

\begin{lemma}\label{lemma: action of Bk} Let $2\leq k \leq n$ and $w\in S_n$ such that $R(w)\in \RS(\lambda)$.  For all $g_k\in B_k(w)$ we have
\[
g_k(\Span\{\mathbf{e}_{w(1)}, \mathbf{e}_{w(2)}, \cdots, \mathbf{e}_{w(k)}\}) = \Span\{\mathbf{e}_{w(1)}, \mathbf{e}_{w(2)}, \cdots, \mathbf{e}_{w(k)}\}
\]
and $g_k\mathbf{e}_{w(j)} = \mathbf{e}_{w(j)}$ for all $j\geq k$.  Furthermore, $g_k(\ker(X_\lambda))\subseteq \ker(X_\lambda)$.
\end{lemma}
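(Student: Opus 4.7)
The lemma comprises three separate assertions, and I would prove them in the order (ii) $g_k\mathbf{e}_{w(j)}=\mathbf{e}_{w(j)}$ for $j\geq k$, (i) invariance of the span, and (iii) invariance of $\ker(X_\lambda)$. The common thread is to translate the action described in Definition~\ref{defn: B-matrices} into geometric statements about box positions in $R(w)$ and $R(e)$, then exploit the row-strictness of $R(w)$ together with the inversion condition in Definition~\ref{defn: Hess inversions}.

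For (ii), I would argue by contradiction. If $g_k\mathbf{e}_{w(j)}\neq \mathbf{e}_{w(j)}$ then Definition~\ref{defn: B-matrices} requires $\mathbf{e}_{w(j)}=X_\lambda^m \mathbf{e}_{w(\ell)}$ for some $(k,\ell)\in \inv_\lambda^k(w)$ and $m\geq 0$. Since $X_\lambda$ acts on the standard basis by moving labels one box to the left in $R(e)$, this translates to $j$ labeling the $m$-th box to the left of $\ell$ in $R(w)$. Row-strictness of $R(w)\in \RS(\lambda)$ forces $j\leq \ell < k$, contradicting $j\geq k$.

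For (i), note that $g_k$ is upper unitriangular and hence invertible, so it is enough to show that the span is mapped into itself; equality then follows by dimension. By (ii), $g_k$ fixes $\mathbf{e}_{w(k)}$; for $1\leq j < k$, Definition~\ref{defn: B-matrices} gives $g_k\mathbf{e}_{w(j)}$ as either $\mathbf{e}_{w(j)}$ or $\mathbf{e}_{w(j)} + x_{w(k)w(\ell)}X_\lambda^m\mathbf{e}_{w(k)}$. The correction term is either zero or of the form $\mathbf{e}_{w(k')}$, where $k'$ labels the $m$-th box to the left of $k$ in $R(w)$. Row-strictness gives $k'\leq k$, so $\mathbf{e}_{w(k')}\in \Span\{\mathbf{e}_{w(1)},\ldots,\mathbf{e}_{w(k)}\}$.

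For (iii), first observe that $\ker(X_\lambda)=\Span\{\mathbf{e}_{w(j)} \mid j \textup{ labels a leftmost box of its row in } R(w)\}$, so by linearity it suffices to verify $g_k\mathbf{e}_{w(j)}\in \ker(X_\lambda)$ for each such basis vector. The only nontrivial case is when $g_k\mathbf{e}_{w(j)}=\mathbf{e}_{w(j)}+c\cdot X_\lambda^m\mathbf{e}_{w(k)}$ arising from some $(k,\ell)\in \inv_\lambda^k(w)$ with $\mathbf{e}_{w(j)}=X_\lambda^m\mathbf{e}_{w(\ell)}$. Since $j$ is leftmost in its row and $\ell$ sits $m$ boxes to its right, $\ell$ occupies column $m+1$. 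The Hessenberg inversion condition places $k$ in some column at most $m+1$, which yields $X_\lambda^{m+1}\mathbf{e}_{w(k)}=0$, i.e., $X_\lambda^m\mathbf{e}_{w(k)}\in \ker(X_\lambda)$. I expect this column-counting step to be the main obstacle: one must extract from the abstract inversion condition the precise algebraic fact that $X_\lambda^{m+1}\mathbf{e}_{w(k)}$ vanishes, which requires relating the column index of a box in $R(w)$ to how many applications of $X_\lambda$ kill the corresponding basis vector.
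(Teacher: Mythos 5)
Your proposal is correct, and for the first two assertions it is essentially the paper's argument: translate Definition~\ref{defn: B-matrices} into box positions ($j$ sits $m$ boxes left of $\ell$ in $R(w)$, so row-strictness gives $j\leq \ell<k$, and the correction term is $\mathbf{e}_{w(k')}$ with $k'$ left of $k$, so $k'\leq k$), with span equality from invertibility. Where you diverge is the kernel statement. The paper argues via the fact, recorded just before the lemma, that $B_k(w)\subseteq U$: writing $X_\lambda^m\mathbf{e}_{w(k)}=\mathbf{e}_{w(k')}$, upper-triangularity forces $w(k')<w(j)$, and since $w(j)$ lies in the first column of the base filling $R(e)$ (whose first column is filled bottom-to-top with the smallest labels), $w(k')$ must lie below it in that same column, hence $X_\lambda\mathbf{e}_{w(k')}=\mathbf{0}$. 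You instead count columns in $R(w)$: $j$ in column $1$ and $\ell$ exactly $m$ boxes to its right put $\ell$ in column $m+1$, and condition (1) of the inversion definition puts $k$ in a column at most $m+1$, so $X_\lambda^{m+1}\mathbf{e}_{w(k)}=\mathbf{0}$ directly from the definition of $X_\lambda$ via the base filling. Both are valid; your version is a bit more self-contained (it does not invoke $B_k(w)\subseteq U$ or the specific bottom-to-top ordering of the first column of $R(e)$), while the paper's reuses structure it has already set up. The ``obstacle'' you flag is minor: the fact that a basis vector labeled by a box in column $c$ satisfies $X_\lambda^{c}\mathbf{e}=\mathbf{0}$ is immediate from Definition~\ref{def: fixed element}, since each application of $X_\lambda$ moves the label one box to the left in $R(e)$ and annihilates first-column labels.
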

\begin{proof} By definition, if $\mathbf{e}_{w(j)} = X_\lambda^m \mathbf{e}_{w(\ell)}$ for some $m\in \Z_{\geq0}$ and $(k,\ell)\in \inv_\lambda^k(w)$ then $w(j)$ fills the $m$-th box to the left of $w(\ell)$ in $R(e)$.  Therefore $j$ fills the $m$-th box to the left of $\ell$ in $R(w)$ and since $R(w)\in \RS(\lambda)$ we have $j\leq \ell < k$.  This shows $g_k\mathbf{e}_{w(j)} = \mathbf{e}_{w(j)}$ for all $j\geq k$.  Similarly, if $X_\lambda^m \mathbf{e}_{w(k)} = \mathbf{e}_{w(k')}$ then $k'$ fills the $m$-th box to the left of $k$ in $R(w)$ and $k'\leq k$.  The first assertion of the lemma now follows.

To prove the second, suppose $\mathbf{e}_{w(j)} \in \ker(X_\lambda)$ with $\mathbf{e}_{w(j)} = X_\lambda^m \mathbf{e}_{w(\ell)}$ for some $m\in \Z_{\geq 0}$ and $(k,\ell)\in \inv_\lambda^k(w)$.  We must show $g_k\mathbf{e}_{w(j)}\in \ker(X_\lambda)$, or equivalently, if $X_\lambda^m \mathbf{e}_{w(k)}\neq \mathbf{0}$ then $X_\lambda^m\mathbf{e}_{w(k)}\in \ker(X_\lambda)$.  Since $g_k\in U$, if $X_\lambda^m \mathbf{e}_{w(k)}\neq \mathbf{0}$ then $X_\lambda^m \mathbf{e}_{w(k)}= \mathbf{e}_{w(k')}$ where $w(k')< w(j)$.  The assumption that $\mathbf{e}_{w(j)}\in \ker(X_\lambda)$ implies $w(j)$ fills a box in the first column of $R(e)$ and the fact that $w(k')<w(j)$ implies that $w(k')$ fills a box below $w(j)$ in the first column of $R(e)$ so $X_\lambda \mathbf{e}_{w(k')}=\mathbf{0}$ as desired.
\end{proof}

The next lemma shows that the matrices in $B_k(w)$ for $k<n$ \textit{almost} commute with $X_\lambda$ and the elements of $B_n(w)$ always commute with $X_\lambda$.

\begin{lemma}\label{lemma: commutator step1} Suppose $R(w)\in \RS(\lambda)$ and let $2\leq k \leq n$.  Given $\ell\in [n]$, let $r_\ell$ denote the label of the box directly to the right of the box labeled by $\ell$ in $R(w)$, if such a box exists.  Then for all $g_k\in B_k(w)$,
\begin{eqnarray}\label{eqn: commutator step1}
(g_kX_\lambda - X_\lambda g_k)\mathbf{e}_{w(j)} = \left\{ \begin{array}{ll} x_{w(k)w(\ell)}\mathbf{e}_{w(k)} & \textup{ if } j=r_{\ell},\, (k, \ell)\in \inv_\lambda^k(w) \\ 0 & \textup{ otherwise}  \end{array}\right.
\end{eqnarray}
where $x_{w(k)w(\ell)}\in \C$ for $(k,\ell)\in \inv_\lambda^k(w)$ is a coordinate of $g_k\in B_k(w)$.  In particular, if $k=n$ this formula shows that $X_\lambda$ and $g_n$ commute.
\end{lemma}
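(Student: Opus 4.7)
The plan is to evaluate $(g_kX_\lambda - X_\lambda g_k)$ on each standard basis vector $\mathbf{e}_{w(j)}$ using the explicit description of $g_k$ from Definition~\ref{defn: B-matrices}. Let $\mathcal{S}$ denote the set of basis vectors on which $g_k - I$ acts non-trivially, namely those of the form $X_\lambda^m\mathbf{e}_{w(\ell)}$ with $(k,\ell)\in \inv_\lambda^k(w)$ and $m \geq 0$. Translating via the identification of boxes of $R(e)$ and $R(w)$, we see that $\mathbf{e}_{w(j)} \in \mathcal{S}$ exactly when $j$ lies in the same row as some $\ell$ with $(k,\ell)\in \inv_\lambda^k(w)$ and its column $c_j$ in $R(w)$ satisfies $c_j \leq c_\ell$. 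By Remark~\ref{rem: k-inversions} such an $\ell$ is unique in its row, so the coefficient $x_{w(k)w(\ell)}$ attached to $\mathbf{e}_{w(j)}$ is unambiguous. I would then split into three cases according to whether (A) $j = r_\ell$ for some $(k,\ell)\in \inv_\lambda^k(w)$, (B) $\mathbf{e}_{w(j)}\in \mathcal{S}$, or (C) neither holds.

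In case (A), $X_\lambda \mathbf{e}_{w(r_\ell)} = \mathbf{e}_{w(\ell)}$, so $g_kX_\lambda \mathbf{e}_{w(r_\ell)} = \mathbf{e}_{w(\ell)} + x_{w(k)w(\ell)}\mathbf{e}_{w(k)}$; Remark~\ref{rem: k-inversions} also prevents $\mathbf{e}_{w(r_\ell)}$ from lying in $\mathcal{S}$, so $X_\lambda g_k \mathbf{e}_{w(r_\ell)} = \mathbf{e}_{w(\ell)}$ and subtracting gives the stated formula. In case (B), writing $\mathbf{e}_{w(j)} = X_\lambda^m \mathbf{e}_{w(\ell)}$ with $m = c_\ell - c_j$, a telescoping computation shows that when $c_j > 1$ the two contributions of $x_{w(k)w(\ell)}X_\lambda^{m+1}\mathbf{e}_{w(k)}$ appearing in $g_kX_\lambda \mathbf{e}_{w(j)}$ and $X_\lambda g_k\mathbf{e}_{w(j)}$ exactly cancel. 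In the boundary case $c_j = 1$ only the term $x_{w(k)w(\ell)}X_\lambda^{c_\ell}\mathbf{e}_{w(k)}$ remains, and this vanishes because condition (1) of Definition~\ref{defn: Hess inversions} forces the column of $k$ in $R(w)$ to be at most $c_\ell$. Case (C) is immediate since then $g_k$ fixes both $\mathbf{e}_{w(j)}$ and $X_\lambda \mathbf{e}_{w(j)}$.

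For the final assertion, I would observe that if $k = n$ and $(n,\ell)\in \inv_\lambda^n(w)$ and the box $r_\ell$ existed in $R(w)$, then condition (2) of Definition~\ref{defn: Hess inversions} with $h = (0,1,\ldots, n-1)$ would force $n \leq h(r_\ell) = r_\ell - 1$, which is impossible. Hence $\ell$ must sit at the right end of its row, case (A) never arises, and the commutator is identically zero. The main technical obstacle is the bookkeeping in case (B): one must use Remark~\ref{rem: k-inversions} to confirm that when $X_\lambda\mathbf{e}_{w(j)}$ also lies in $\mathcal{S}$ it is associated to the same $\ell$ as $\mathbf{e}_{w(j)}$, so that the shifted contributions genuinely cancel rather than combining with unrelated coordinates.
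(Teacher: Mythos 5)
Your proposal is correct and follows essentially the same route as the paper: a direct case-by-case evaluation of the commutator on the basis vectors $\mathbf{e}_{w(j)}$ using Definition~\ref{defn: B-matrices}, with Remark~\ref{rem: k-inversions} guaranteeing the relevant $\ell$ is unique, and the same observation (no box to the right of $\ell$ when $k=n$) for the final assertion. The only difference is that you write out the telescoping cancellation in the ``otherwise'' case, which the paper omits as straightforward, and you justify $g_k\mathbf{e}_{w(r_\ell)}=\mathbf{e}_{w(r_\ell)}$ via Remark~\ref{rem: k-inversions} instead of Lemma~\ref{lemma: action of Bk}; both are valid.
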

\begin{proof} First, suppose $j=r_\ell$ for some $\ell\in [n]$ such that $(k,\ell)\in \inv_\lambda^k(w)$.  By definition, this is the case if and only if $X_\lambda \mathbf{e}_{w(j)}=\mathbf{e}_{w(\ell)}$ and $k\leq j$.  In particular, Lemma~\ref{lemma: action of Bk} tells us $g_k\mathbf{e}_{w(j)}=\mathbf{e}_{w(j)}$.  We now have:
\begin{eqnarray*}
(g_k X_\lambda - X_\lambda g_k) \mathbf{e}_{w(j)} = g_k \mathbf{e}_{w(\ell)} - \mathbf{e}_{w(\ell)} = x_{w(k)w(\ell)}\mathbf{e}_{w(k)}
\end{eqnarray*}
where the last equation follows directly from Definition~\ref{defn: B-matrices}.

It remains to show that if $j\neq r_\ell$ for any $\ell\in [n]$ such that $(k,\ell)\in \inv_\lambda^k(w)$, then $g_kX_\lambda \mathbf{e}_{w(j)} = X_\lambda g_k\mathbf{e}_{w(j)}$.  This is straightforward to prove using Definition~\ref{defn: B-matrices}, so we omit the details.

Finally, when $k=n$ we have $(n,\ell)\in \inv_\lambda(w)$ only if $\ell$ labels a box at the end of a row in $R(e)$.  Thus~\eqref{eqn: commutator step1} tells us $X_\lambda$ and $g_n$ commute.
\end{proof}

Our goal is to use induction to analyze the flags in $C_w\cap \B^{X_\lambda}$.  In particular, the next statement shows that we can naturally identify the group $B_k(w)$ for $k<n$ with a subgroup of the same type in $GL_{n-1}(\C)$ using the map from~\eqref{eqn.induction}.

\begin{proposition}\label{lemma: B_k for  kneq n} Suppose $w\in S_n$ such that $R(w)\in \RS(\lambda)$ and $w=vy$ is the factorization defined in~\eqref{eqn: factorization}, with  $i=w(n)$.  
\begin{enumerate}
\item For all $2\leq k \leq n-1$, we have $v^{-1}B_k(w) v= B_{k}(y)$ where $B_{k}(y)$ is the subgroup of $U_0\subseteq GL_{n-1}(\C)$ corresponding to $R(y)\in \RS(\lambda')$.

\item  Each $g_n\in B_n(w)$ can be factored uniquely as $g_n=u_i b_n$ where $u_i\in U_i$, $b_n\in v U_0 v^{-1}$, and the $i$-th row of $u_i$ is equal to the $i$-th row of $g_n$.  In particular, $u_i = \rho_1^{(\lambda)}(g_n wE_\bullet)$ and there exists an isomorphism of varieties $\C^{d_n} \to B_n(w)$ with inverse given by $g_n \mapsto \rho_1^{(\lambda)}(g_nwE_\bullet)$.

\end{enumerate}
\end{proposition}

\begin{proof} The proof of Lemma~\ref{lemma: Springer case induction} shows that $R(y)$ is the row-strict tableau we obtain by deleting the box labeled by $n$ from $R(w)$.  This lemma also tells us that $R(y)\in \RS(\lambda')$ where $\lambda'$ is the composition of $n$ corresponding to $R(y)$, and for all $k\leq n-1$, $(k,\ell)\in \inv_{\lambda'}(y)$ if and only if $(k,\ell)\in \inv_\lambda(w)$.  Recall that $X_{\lambda'}$ is given by the restriction of $v^{-1}X_\lambda v$ to $\C^{n-1}= \Span\{\mathbf{e}_1, \ldots, \mathbf{e}_{n-1}\}$.

Let $g_k\in B_k(w)$ have coordinates $(x_{w(k)w(\ell_1)}, \ldots, x_{w(k)w(\ell_{d})})$ where $d=d_k$.  We prove $v^{-1}g_k v\in B_k(y)$ by showing $v^{-1}g_k v$ acts on the basis vectors $\{\mathbf{e}_1, \ldots, \mathbf{e}_{n-1}\}$ by the formulas given in Definition~\ref{defn: B-matrices} where $\lambda$ is replaced by $\lambda'$ and $w$ is replaced by $y$.  Suppose $(k,\ell)\in \inv_{\lambda'}(y)$ and $\mathbf{e}_{y(j)} = X_{\lambda'}^m \mathbf{e}_{y(\ell)}$ for some $m\in \Z_{\geq0}$.  By definition of $X_{\lambda'}$, this is the case if and only if $\mathbf{e}_{w(j)} = X_\lambda^m \mathbf{e}_{w(\ell)}$ for $(k,\ell)\in \inv_\lambda^k(w)$. Thus $v^{-1}g_kv$ is the matrix such that:
\begin{eqnarray*}
(v^{-1}g_kv)(\mathbf{e}_{y(j)})&=& v^{-1}(g_k \mathbf{e}_{w(j)}) =  v^{-1}(\mathbf{e}_{w(j)} + x_{w(k)w(\ell)}X_\lambda^m \mathbf{e}_{w(k)})\\ &=& \mathbf{e}_{y(j)} + x_{w(k)w(\ell)}X^m_{\lambda'} \mathbf{e}_{y(k)} .
\end{eqnarray*}
By similar reasoning, we have $(v^{-1}g_kv)\mathbf{e}_{y(j)}=\mathbf{e}_{y(j)}$ if $\mathbf{e}_{y(j)}\neq X_{\lambda'}^m \mathbf{e}_{y(\ell)}$ for any $(k,\ell)\in \inv_{\lambda'}(y)$ and $m\in \Z_{\geq 0}$. This proves $v^{-1}B_k(w)v\subseteq B_k(y)$, after relabeling the coordinate $x_{w(k)w(\ell)}$ by $x_{y(k)y(\ell)}$. The proof that $B_k(y)\subseteq v^{-1}B_k(w)v$ follows in exactly the same way.

To prove statement (2), suppose $g_n\in B_n(w)$ with coordinates $(x_{w(n) w(\ell_1)}, \ldots, x_{w(n) w(\ell_d)}) = (x_{i w(\ell_1)}, \ldots, x_{i w(\ell_d)})$ where $d=d_n$.  Let $u_i$ be the matrix defined uniquely by the equation
\[
u_i (\mathbf{e}_{w(\ell)}) = \left\{ \begin{array}{ll} \mathbf{e}_{w(\ell)} + x_{i w(\ell)} \mathbf{e}_{i} & \textup{ if $(n,\ell) \in \inv_\lambda^n(w)$}\\ \mathbf{e}_{w(\ell)} & \textup{ otherwise. } \end{array}\right. 
\]
Then $u_i\in U_i$ and by definition $u_i$ has $i$-th row equal to $g_n$.  Furthermore, $b_n$ is uniquely determined by the formula
\[
b_n (\mathbf{e}_{w(j)}) = \left\{ \begin{array}{ll} \mathbf{e}_{w(j)} + x_{i w(\ell)} X_\lambda^m \mathbf{e}_{i} & \textup{ if $(n,\ell) \in \inv_\lambda^n(w)$, $\mathbf{e}_{w(j)} = X_\lambda^m e_{w(\ell)}$ for some $m\geq 1$}\\ \mathbf{e}_{w(j)} & \textup{ otherwise. } \end{array}\right.
\]
for all $m\geq 1$.

By the previous paragraph, the matrix $u_i$ is uniquely determined by, and uniquely determines, $g_n$. We now show that $b_n:= u_i^{-1}g_n\in vU_0v^{-1}$. It is straightforward to see from the definition of $u_i$ and $g_n$ that  $u_i^{-1}g_n$ has the same entries as $g_n$ except for the $i$-th row, which is $0$ in all off-diagonal entries.  Thus $u_i^{-1}g_n$ may only have a nonzero off-diagonal entry in positions $(w(n'), w(\ell'))$ with $w(n')<w(\ell')$ where $n'$ labels the $m$-th box to the left of $n$ in $R(w)$ and $\ell'$ labels the $m$-th box to the left of $\ell$ in $R(w)$ for some $m\in \Z_{\geq1}$.  In particular, $n'\neq n$.  It follows that $v^{-1}(u_i^{-1}g_n)v$ may only have nonzero off-diagonal entries in positions $(y(n'), y(\ell'))$ where $y(n')\neq n$, $y(\ell')\neq n$, and $y(n')<y(\ell')$ so $b_n\in vU_0v^{-1}$ as desired.

Finally, the map $\C^{d_n} \to B_n(w)$ defined by $(x_{iw(\ell_1)}, \ldots, x_{iw(\ell_d)}) \mapsto g_n(x_{iw(\ell_1)}, \ldots, x_{iw(\ell_d)})$ is clearly a morphism of varieties. By the above, we may write $g_n = u_ib_n$.  Since $g_n$ commutes with $X_\lambda$ we get $g_nwE_\bullet \in \B^{X_\lambda}$ and:
\[
g_n w E_\bullet = u_i v u_0' y E_\bullet\; \textup{ where } \; u_0'=v^{-1}b_n v\in U_0, u_i\in U_i. 
\]
This shows that $\rho_1^{(\lambda)}(g_nwE_\bullet) = u_i$, where $\rho_1^{(\lambda)}$ is the morphism defined in Lemma~\ref{lemma: zeros}.
The inverse map $B_n(w) \to \C^{d_n}$ is precisely $g_n \mapsto u_i = \rho_1^{(\lambda)}(g_nwE_\bullet)$.  This proves $\C^{d_n} \to B_n(w)$ is an isomorphism of varieties.
\end{proof}

\begin{example}\label{ex: induction} We illustrate the previous proposition with an example.  Let $n=6$ and $\lambda = (2,2,2)$ and $w=[3,6,2,1,5,4]$.  In this case, $v=[1,2,3,5,6,4]$ and $y=[3,5,2,1,4,6]$.  
\[\ytableausetup{centertableaux}
R(w) =
\begin{ytableau}
1 & 2 \\
3 & 5 \\
4 & 6 
\end{ytableau} 
\quad\quad\quad
R(e) =  \begin{ytableau}
3 & 6 \\
2 & 5 \\
1 & 4
\end{ytableau} 
\]
We have $\inv_\lambda^6(w) = \{(6,5), (6,2)\}$ where $(w(6), w(5)) = (4,5)$ and $(w(6), w(2)) = (4,6)$. An arbitrary element of $B_6(w)$ is of the form:
\begin{eqnarray*}
g_6 &=& I_6 + x_{45}(E_{45}+E_{12}) + x_{46}(E_{46}+E_{13}) \\ &=& (I_6+ x_{45}E_{45}+x_{46}E_{46})(I_6+ x_{45}E_{12}+x_{46}E_{13}) = u_4 b_6
\end{eqnarray*}
where $x_{45},x_{46}\in \C$ are the coordinates of $g_6$, $u_4= I_6+ x_{45}E_{45}+x_{46}E_{46}$ and $b_6= I_6+ x_{45}E_{12}+x_{46}E_{13}$. Note that $v^{-1}b_6v=b_6\in U_0$, confirming statement (2) of Proposition~\ref{lemma: B_k for  kneq n}.  The isomorphism $\C^2 \to B_6(w)$ is defined by 
\[
(x_{45}, x_{46}) \mapsto (I_6+ x_{45}E_{45}+x_{46}E_{46})(I_6+ x_{45}E_{12}+x_{46}E_{13}) = g_6(x_{45}, x_{46}).
\]

Now consider $\inv_\lambda^4(w) = \{(4,2), (4,3)\}$; we have $(w(4),w(2)) = (1, 6)$ and $(w(4), w(3)) = (1,2)$.  An arbitrary element of $B_4(w)$ is of the form:
\[
g_4 = I_6+ x_{12}E_{12} + x_{16}E_{16}\]
where $x_{12}, x_{16}$ are the coordinates of $g_4$. We have $v^{-1}g_4v = I_6 + x_{12}E_{12} + x_{16}E_{15}\in B_4(y)\subset U_0$.  Since $B_4(y) = \{I_5 + x_{12}E_{12} + x_{15}E_{15} \mid x_{12}, x_{15} \}$ we obtain $v^{-1}B_4(w)v = B_4(y)$ via the identification from~\eqref{eqn.induction}, confirming statement (1) of Proposition~\ref{lemma: B_k for  kneq n}.
\end{example}


\section{An affine paving}\label{sec.paving}

In this section, we prove our first main theorem.  In particular, Theorem~\ref{thm: main thm2} below tells us that $\Hess(X_\lambda, h)$ is paved by affines for all $h\in \mathcal{H}$.   As noted above, this result is not new, but our constructive methods are elementary and provide more insight into the structure of the paving.   We obtain our paving of $\Hess(X_\lambda,h)$ by restricting an affine paving for the Springer fiber $\B^{X_\lambda}$.  As a consequence, we recover Tymoczko's formulas for the dimension of each affine cell, originally proved in~\cite[Theorem 7.1]{Tymoczko2006} for Hessenberg varieties in the flag variety of $GL_n(\C)$ corresponding to Hessenberg functions such that $h(i)\geq i$ for all $i$.

We begin by constructing an affine paving of the Springer fiber $\B^{X_\lambda}$ using the subgroups $B_k(w)$ introduced above.  Throughout this section, $\lambda$ denotes a fixed composition of $n$.  The next proposition is the lynchpin of our inductive arguments; our proof resembles that of \cite[Lemma 1]{Spaltenstein1976} (see also \cite[\S 11.3]{Jantzen}).

\begin{proposition}\label{prop.isom}  Suppose $R(w)\in \RS(\lambda)$ and $w=vy$ is the factorization from~\eqref{eqn: factorization} with $i=w(n)$.  Let $\lambda'$ be the composition of $n-1$ obtained from $\lambda$ by deleting the box labeled by $n$ in $R(w)$ and $X_{\lambda'}$ denote the restriction of $v^{-1}X_\lambda v$ to $\C^{n-1}=\Span\{\mathbf{e}_1, \mathbf{e}_2, \ldots, \mathbf{e}_{n-1}\}$.  There exists an isomorphism of varieties 
\[
\varphi_w: \C^{d_n} \times (C_y\cap \B^{X_{\lambda'}}) \to C_w\cap \B^{X_\lambda}
\]
where $d_n=|\inv_\lambda^n(w)|$.
\end{proposition}
\begin{proof} We use the isomorphism $\C^{d_n}\to B_n(w)$ from Proposition~\ref{lemma: B_k for  kneq n}(2) throughout the proof.  Given a flag $V_\bullet'\in \B^{X_{\lambda'}}$ we write $V_\bullet' = u_0yE_\bullet'$ for some $u_0\in U_0$.  It suffices to show that 
\[
\varphi_w: B_n(w) \times (C_y\cap \B^{X_\lambda'}) \to C_w\cap \B^{X_\lambda},\; (g_n, u_0yE_\bullet')\mapsto g_nvu_0yE_\bullet
\]
is an isomorphism of varieties. Note first that $\varphi_w$ is well defined since $g_n$ commutes with $X_\lambda$ by Lemma~\ref{lemma: commutator step1} and $vu_0yE_\bullet\in \B^{X_\lambda}$ by Remark~\ref{rem.restriction}.

To begin, we argue that $\varphi_w$ is a bijection.  Suppose first that $g_nvu_0yE_\bullet = h_n v u_0' yE_\bullet$ for some $g_n,h_n \in B_n(w)$ and $u_0,u_0'\in U_0$. By Proposition~\ref{lemma: B_k for kneq n}(2), we may write $g_n =u_ib_n$ for $u_i \in U_i$ such that the $i$-th row of $u_i$ is equal to the $i$-th row of $g_n$ and $b_n\in vU_0v^{-1}$.  Similarly, we have $h_n = u_i' b_n'$ for $u_i' \in U_i$ such that the $i$-th row of $u_i'$ is equal to the $i$-th row of $h_n$ and $b_n'\in vU_0v^{-1}$.   Thus
\[
u_i = \rho_1^{(\lambda)}(g_n vu_0 E_\bullet) = \rho_1^{(\lambda)}(h_n vu_0' E_\bullet) = u_i'
\]
where $\rho_1^{(\lambda)}$ is the map defined in~\eqref{eqn.proj-morphism2} above. Proposition~\ref{lemma: B_k for kneq n}(2) now implies $g_n=h_n$ and consequently $u_0yE_\bullet =  u_0'yE_\bullet$ as well.  This shows that $\varphi_w$ is injective.

To prove surjectivity, let $uwE_\bullet\in C_w\cap \B^{X_\lambda}$ for some $u\in U^w$.  Using Lemma~\ref{lemma: Schubert cell decomp} we may write $uwE_{\bullet} = u_ivu_0yE_\bullet$ for $u_0\in U_0$ and $u_i\in U_i$.  By Lemma~\ref{lemma: zeros}, we have that $\rho_1^{(\lambda)}(uwE_\bullet) = u_i \in \C^{d_n}$. Let $g_n\in B_n(w)$ have coordinates determined by $\rho_1^{(\lambda)}(uwE_\bullet)$ via the isomorphism from Proposition~\ref{lemma: B_k for kneq n}(2).  Thus we have $g_n= u_i b_n$ where $u_i = \rho_1^{(\lambda)}(uwE_\bullet)$ and $b_n \in  vU_0v^{-1}$.   Now 
\[
g_n^{-1} uwE_\bullet = v  u_0'yE_\bullet \in \B^{X_\lambda}\,  \textup{ where } \, u_0' = v^{-1}b_nv u_0 \in U_0.  
\]
This shows that $\varphi_w(g_n, u_0'yE_\bullet') = uwE_\bullet$, so $\varphi_w$ is also surjective.

The bijection $\varphi_w$ is in fact a morphism of varieties since $\C^{d_n} \to B_n(w)$ and the action map $GL_{n-1}(\C) \times \Flags(\C^n) \to \Flags(\C^n)$ are morphisms of varieties.  The inverse map, defined by $\varphi_w^{-1} (uwE_\bullet) = ( g_n ,  v^{-1}g_n^{-1}uwE_{\bullet}')$ where $g_n\in B_n(w)$ has coordinates determined by $\rho_1^{(\lambda)}(uwE_\bullet)$, is also a morphism of varieties. This concludes the proof.
\end{proof}

We now introduce flags constructed using the subgroups $B_k(w)$ of the previous section.  We use these flags to give an explicit description of our affine paving of the Springer fibers below.

\begin{definition}\label{def.Dset} Let $w\in S_n$ such that $R(w)\in \RS(\lambda)$.  We define a subset $\mathcal{D}_w\subseteq C_w$ inductively as follows. First, set $\mathcal{D}_w^1 = \{w(E_\bullet)\}$. Then for each $2\leq k \leq n$ let 
\[
\mathcal{D}^k_w = \{g_k(V_\bullet) \mid g_k\in B_k(w) \textup{ and } V_\bullet\in \mathcal{D}^{k-1}_w  \}.
\]
Let $\mathcal{D}_w := \mathcal{D}_w^n$. In other words, $bwE_\bullet \in \mathcal{D}_w$ if and only if $b=g_ng_{n-1}\ldots g_2$ with $g_k\in B_k(w)$ for all~$k$.
\end{definition}

The definition of $\mathcal{D}_w$ depends on our choice of composition $\lambda$ (since $R(w)\in \RS(\lambda)$ and the definition of $B_k(w)$ depends on $R(w)$), but we suppress this dependence in the notation for $\mathcal{D}_w$.  Note that for any composition, we have $\mathcal{D}_e = \{E_\bullet\}$. 

\begin{example}\label{ex: Dw set} 
Continuing Example~\ref{ex: induction}, let $n=6$ and $\lambda = (2,2,2)$ and $w=[3,6,2,1,5,4]$.  
\[\ytableausetup{centertableaux}
R(w) =
\begin{ytableau}
1 & 2 \\
3 & 5 \\
4 & 6 
\end{ytableau} 
\quad\quad\quad
R(e) =  \begin{ytableau}
3 & 6 \\
2 & 5 \\
1 & 4
\end{ytableau} 
\]
In this case, $\inv_\lambda(w) = \{(6,5), (6,2), (5,2),(4,3),(4,2),(3,2)\}$.  Computing the $B_k(w)$ subgroups, we have that $B_2(w)=\{I_6\}$ and: 
\begin{eqnarray*}
 B_{3}(w) = \left\{\begin{bmatrix}
1 & 0 & 0 & 0 & 0 & 0 \\
0 & 1 & 0 & 0 & 0 & x_{26} \\
0 & 0 & 1 & 0 & 0 & 0 \\
0 & 0 & 0 & 1 & 0 & 0 \\
0 & 0 & 0 & 0 & 1 & 0 \\
0 & 0 & 0 & 0 & 0 & 1 \\
 \end{bmatrix} \right\},\;\;\;
  B_{4}(w) = \left\{\begin{bmatrix}
1 & x_{12} & 0 & 0 & 0 & x_{16} \\
0 & 1 & 0 & 0 & 0 & 0 \\
0 & 0 & 1 & 0 & 0 & 0 \\
0 & 0 & 0 & 1 & 0 & 0 \\
0 & 0 & 0 & 0 & 1 & 0 \\
0 & 0 & 0 & 0 & 0 & 1 \\
 \end{bmatrix}\right\}, \\
 B_{5}(w) = \left\{\begin{bmatrix}
1 & 0 & 0 & 0 & 0 & 0 \\
0 & 1 & x_{56} & 0 & 0 & 0 \\
0 & 0 & 1 & 0 & 0 & 0 \\
0 & 0 & 0 & 1 & 0 & 0 \\
0 & 0 & 0 & 0 & 1 & x_{56} \\
0 & 0 & 0 & 0 & 0 & 1 \\
 \end{bmatrix}\right\},\;\;
  B_{6}(w) = \left\{\begin{bmatrix}
1 & x_{45} & x_{46} & 0 & 0 & 0 \\
0 & 1 & 0 & 0 & 0 & 0 \\
0 & 0 & 1 & 0 & 0 & 0 \\
0 & 0 & 0 & 1 & x_{45} & x_{46} \\
0 & 0 & 0 & 0 & 1 & 0 \\
0 & 0 & 0 & 0 & 0 & 1 \\
 \end{bmatrix}\right\}
\end{eqnarray*}
where $x_{26}, x_{12}, x_{16}, x_{56}, x_{45}, x_{46}\in \C$.  This gives: 
\begin{eqnarray*}
\mathcal{D}_w^1 &=& \mathcal{D}_w^2 = \{(\mathbf{e}_3 \mid \mathbf{e}_6 \mid \mathbf{e}_2 \mid \mathbf{e}_1 \mid \mathbf{e}_5 \mid \mathbf{e}_4)\}\\ 
\mathcal{D}_w^3 &=& \{(\mathbf{e}_3 \mid \mathbf{e}_6 +x_{26}\mathbf{e}_2 \mid \mathbf{e}_2 \mid \mathbf{e}_1 \mid \mathbf{e}_5 \mid \mathbf{e}_4) \} \\
\mathcal{D}_w^4 &=& \{(\mathbf{e}_3 \mid \mathbf{e}_6+x_{16}\mathbf{e}_1+x_{26}(\mathbf{e}_2+x_{12}\mathbf{e}_1) \mid \mathbf{e}_2+x_{12}\mathbf{e}_1 \mid \mathbf{e}_1 \mid \mathbf{e}_5 \mid \mathbf{e}_4)\} \\
\mathcal{D}_w^5 &=& \{(\mathbf{e}_3 + x_{56}\mathbf{e}_2 \mid \mathbf{e}_6+x_{56}\mathbf{e}_5 +x_{16}\mathbf{e}_1+x_{26}(\mathbf{e}_2+x_{12}\mathbf{e}_1) \mid \mathbf{e}_2+x_{12}\mathbf{e}_1 \mid \mathbf{e}_1 \mid \mathbf{e}_5 \mid \mathbf{e}_4)\},\\
\mathcal{D}_w^6 &=& \{(\mathbf{e}_3 + x_{46}\mathbf{e}_1+x_{56}(\mathbf{e}_2+x_{45}\mathbf{e}_1) \mid \mathbf{e}_6 + x_{46}\mathbf{e}_4+x_{56}(\mathbf{e}_5+x_{45}\mathbf{e}_4) + x_{16}\mathbf{e}_1  \cdots\\ 
&& \quad\quad\quad\quad\quad \cdots +x_{26}(\mathbf{e}_2+x_{45}\mathbf{e}_1+x_{12}\mathbf{e}_1) \mid  \mathbf{e}_2+x_{45}\mathbf{e}_1+x_{12}\mathbf{e}_1\mid \mathbf{e}_1 \mid \mathbf{e}_5+x_{45}\mathbf{e}_4 \mid \mathbf{e}_4)\}.
\end{eqnarray*}
\end{example}

\

The set $\mathcal{D}_w$ has an inductive structure.  Indeed, if $w(n)=i$ write $w=vy$ where $y\in S_{n-1}$ and $v=s_{i}s_{i+1}\cdots s_{n-2}s_{n-1}$ as in~\eqref{eqn: factorization}.  Let $V_\bullet = g_ng_{n-1}\cdots g_2wE_{\bullet}\in \mathcal{D}_w$.  Statement (1) of Proposition~\ref{lemma: B_k for  kneq n} implies $g_k':=v^{-1}g_k v\in B_k(y)$ for all $2\leq k \leq n-1$, so $V_\bullet' := g_{n-1}'\cdots g_2' y(E_{\bullet}') \in \mathcal{D}_y\subset \Flags(\C^{n-1})$.  Here $\mathcal{D}_y$ is the collection of flags determined as in Definition~\ref{def.Dset} for $y\in S_{n-1}$ with $R(y)\in \RS(\lambda')$.
We obtain a surjective map:
\begin{eqnarray}\label{eqn: induction Dw set}
B_n(w) \times \mathcal{D}_y \to \mathcal{D}_w ; \; (g_n, V_\bullet')\mapsto V_\bullet.
\end{eqnarray}
The next result gives our affine paving of $\B^{X_\lambda}$.

\begin{theorem} \label{thm: main thm1} Let $w\in S_n$ and $\lambda$ a composition of $n$ such that $R(w)\in \RS(\lambda)$.  There exists an isomorphism of varieties $\widetilde{\varphi}_w: \C^{d_w} \to C_w\cap \B^{X_\lambda}$ with image equal to $\mathcal{D}_w$.  
\end{theorem}
\begin{proof} We argue using induction on $n\geq 1$.  The base case of $n=1$ is trivial since $\Flags(\C) = \{E_\bullet\}$ where $E_\bullet= \Span\{\mathbf{e}_1\}$ in this case.  The required isomorphism $\widetilde{\varphi}_e$ is the map $\mathbf{0} \mapsto E_{\bullet}$. 

Now suppose $n\geq 2$. Let $w=vy$ be the factorization of $w$ from~\eqref{eqn: factorization}, $\lambda'$ the composition of $n-1$ obtained from $\lambda$ by deleting the box labeled by $n$ in $R(w)$, and $X_{\lambda'}$ the restriction of $v^{-1}X_\lambda v$ to $\C^{n-1} = \Span\{ \mathbf{e}_1, \mathbf{e}_2, \ldots, \mathbf{e}_{n-1} \}$.  Consider $C_y\cap \B^{X_{\lambda'}} \subseteq \Flags(\C^{n-1})$.  By induction, there exists an isomorphism of varieties $\widetilde{\varphi}_y: \C^{d_y} \to C_y\cap \B^{X_{\lambda'}}$ with image equal to $\mathcal{D}_y$. Here $d_y = |\inv_{\lambda'}(y)|$ and $\mathcal{D}_y$ is the collection of flags determined as in Definition~\ref{def.Dset} for $y\in S_{n-1}$ with $R(y)\in \RS(\lambda')$.  By Lemma~\ref{lemma: Springer case induction} we have $d_w = d_n + d_y$ and Proposition~\ref{lemma: B_k for  kneq n}(2) gives us an isomorphism $\C^{d_w} \simeq B_n(w)\times \C^{d_y}$.  Using this identification we define 
\[
\widetilde{\varphi}_w:  B_n(w)\times \C^{d_y} \to C_w\cap \B^{X_\lambda},\; \widetilde{\varphi}_w(g_n, \mathbf{x}) = \varphi_w(g_n, \widetilde{\varphi}_y(\mathbf{x}))
\]
where $\varphi_w$ is the isomorphism of Proposition~\ref{prop.isom}.
This is an isomorphism of varieties since both $\varphi_w$ and $\widetilde{\varphi}_y$ are isomorphisms. Using the description of the image of $\widetilde{\varphi}_y$ and definition of $\varphi_w$, it follows that the image of $\widetilde{\varphi}_w$ is equal to the image of the map~\eqref{eqn: induction Dw set}, which is $\mathcal{D}_w$.
\end{proof}

By the theorem, $C_w\cap \B^{X_\lambda} = \mathcal{D}_w$ for all $R(w)\in \RS(\lambda)$.
The main theorem of this section analyzes the structure of the intersections $\mathcal{D}_w\cap \Hess(X_\lambda, h)$ in greater detail.  
We begin with a technical statement generalizing Lemma~\ref{lemma: commutator step1}; it shows that the flags in $\mathcal{D}_w$ satisfy strong linear conditions. 

\begin{lemma}\label{lemma: commutator step2}  Let $R(w)\in \RS(\lambda)$ and $g_k\in B_k(w)$ for each $k$. Furthermore, for each $2\leq k \leq n$ let
\[
\mathbf{v}_j^{(k)} = g_kg_{k-1}\cdots g_2\mathbf{e}_{w(j)} \textup{ for all } 1\leq j \leq n.
\] 
Given $\ell\in [n]$, let $r_\ell$ denote the label of the box directly to the right of the box labeled by $\ell$ in $R(w)$, if such a box exists.  Then 
\begin{eqnarray}\label{eqn: commutator}
(g_kX_\lambda - X_\lambda g_k) \mathbf{v}_j^{(k-1)} = \left\{ \begin{array}{ll} x_{w(k)w(\ell)}\mathbf{v}_k^{(k-1)} & \textup{ if } j=r_{\ell},\, (k, \ell)\in \inv_\lambda^k(w) \\ 0 & \textup{ otherwise}  \end{array}\right.
\end{eqnarray}
where $x_{w(k)w(\ell)}\in \C$ for $(k,\ell)\in \inv_\lambda^k(w)$ is a coordinate of $g_k\in B_k(w)$.
\end{lemma}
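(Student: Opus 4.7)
The plan is to reduce Lemma~\ref{lemma: commutator step2} to Lemma~\ref{lemma: commutator step1}. First I would observe that $\mathbf{v}_k^{(k-1)} = \mathbf{e}_{w(k)}$: since $m \leq k-1 < k$ for each factor $g_m$ in the product, Lemma~\ref{lemma: action of Bk} gives $g_m \mathbf{e}_{w(k)} = \mathbf{e}_{w(k)}$, so $\mathbf{v}_k^{(k-1)} = g_{k-1}\cdots g_2 \mathbf{e}_{w(k)} = \mathbf{e}_{w(k)}$. In view of this identity, formula~\eqref{eqn: commutator} will follow from Lemma~\ref{lemma: commutator step1} applied to $\mathbf{e}_{w(j)}$ once I show $(g_kX_\lambda - X_\lambda g_k)(\mathbf{v}_j^{(k-1)} - \mathbf{e}_{w(j)}) = 0$ for every $j \in [n]$.

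To prove this vanishing, I would examine the standard basis expansion of $\mathbf{u}_j := \mathbf{v}_j^{(k-1)} - \mathbf{e}_{w(j)} = (g_{k-1}\cdots g_2 - I)\mathbf{e}_{w(j)}$. Writing $g_m = I + N_m$ and expanding, $\mathbf{u}_j$ decomposes as a sum of terms $N_{m_1}N_{m_2}\cdots N_{m_s}\mathbf{e}_{w(j)}$ indexed by nonempty strictly decreasing tuples $k-1 \geq m_1 > m_2 > \cdots > m_s \geq 2$. By Definition~\ref{defn: B-matrices}, each $N_m$ sends any basis vector either to zero or to a scalar multiple of $X_\lambda^p \mathbf{e}_{w(m)} = \mathbf{e}_{w(m')}$, where $m'$ labels the $p$-th box to the left of $m$ in $R(w)$; row-strictness of $R(w)$ forces $m' \leq m$. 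Applying this to the leftmost factor $N_{m_1}$ shows that every basis vector $\mathbf{e}_{w(a)}$ appearing in $\mathbf{u}_j$ has $a \leq m_1 \leq k-1$.

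Combining this bound with Lemma~\ref{lemma: commutator step1} finishes the argument: the commutator $g_kX_\lambda - X_\lambda g_k$ annihilates any $\mathbf{e}_{w(a)}$ unless $a = r_{\ell'}$ for some $(k, \ell') \in \inv_\lambda^k(w)$, and in that case condition (2) of Definition~\ref{defn: Hess inversions} applied with the Springer function $h(i) = i-1$ forces $r_{\ell'} \geq k+1$. Since every index $a$ appearing in $\mathbf{u}_j$ satisfies $a \leq k-1 < k+1 \leq r_{\ell'}$, the commutator annihilates $\mathbf{u}_j$ term-by-term, yielding the desired vanishing. Substituting back and replacing $\mathbf{e}_{w(k)}$ by $\mathbf{v}_k^{(k-1)}$ via the first step gives~\eqref{eqn: commutator}.

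There is no serious obstacle here; the argument is essentially an exercise in bookkeeping, hinging on the chain of inequalities $a \leq k-1 < r_{\ell'}$ that cleanly separates the indices supporting $\mathbf{u}_j$ from those on which $g_kX_\lambda - X_\lambda g_k$ acts nontrivially. The only thing to be careful about is confirming that the expansion of the product $g_{k-1}\cdots g_2$ truly produces basis vectors only with index at most $k-1$, which is immediate from the row-strictness of $R(w)$ together with the defining action of the $N_m$.
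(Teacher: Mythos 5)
Your proof is correct and takes essentially the same route as the paper: both reduce \eqref{eqn: commutator} to Lemma~\ref{lemma: commutator step1} by exploiting that the kernel of $g_kX_\lambda - X_\lambda g_k$ contains every $\mathbf{e}_{w(a)}$ with $a\leq k$ (since any $r_{\ell}$ with $(k,\ell)\in\inv_\lambda^k(w)$ exceeds $k$), so that the product $g_{k-1}\cdots g_2$ does not affect the outcome. The only cosmetic difference is that the paper proves the slightly stronger fact that $g_{k-1}\cdots g_2$ commutes with the commutator, invoking Lemma~\ref{lemma: action of Bk}, whereas you expand $g_{k-1}\cdots g_2 - I$ into products of the nilpotent parts of the $g_m$ to see directly that $\mathbf{v}_j^{(k-1)}-\mathbf{e}_{w(j)}$ is supported on indices $a\leq k-1$ and hence annihilated; this is equivalent bookkeeping.
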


\begin{proof} Since $\mathbf{v}_j^{(k-1)} = g_{k-1}\cdots g_2\mathbf{e}_{w(j)}$ and $\mathbf{v}_k^{(k-1)} = g_{k-1}\cdots g_2 \mathbf{e}_{w(k)}$, by Lemma~\ref{lemma: commutator step1} it suffices to show that the matrices $A=g_{k-1}\cdots g_2$ and $g_kX_\lambda - X_\lambda g_k$ commute.

In order to establish this claim, note that Lemma~\ref{lemma: commutator step1} also implies 
\[
\ker(g_kX_\lambda - X_\lambda g_k)=\Span\{\mathbf{e}_{w(j)} \mid j\neq r_\ell  \textup{ for } (k, \ell) \in \inv_\lambda^k(w) \}.
\]  
In particular, we have $\Span\{\mathbf{e}_{w(1)}, \ldots, \mathbf{e}_{w(k)}\}\subseteq \ker(g_kX_\lambda - X_\lambda g_k)$.

If $\mathbf{e}_{w(j)}\notin \ker(g_kX_\lambda - X_\lambda g_k)$ then $j = r_\ell$ for some $(k, \ell)\in \inv_\lambda^k(w)$ and $j \geq k$.  Lemma~\ref{lemma: action of Bk} implies $A\mathbf{e}_{w(j)}= \mathbf{e}_{w(j)}$ and $A\mathbf{e}_{w(k)}= \mathbf{e}_{w(k)}$ so:
\[
(g_kX_\lambda - X_\lambda g_k)A\mathbf{e}_{w(j)} = x_{w_{k}w_{\ell}} \mathbf{e}_{w(k)} = A(g_kX_\lambda - X_\lambda g_k)\mathbf{e}_{w(j)}
\]
by Lemma~\ref{lemma: commutator step1}.  Now suppose $\mathbf{e}_{w(j)}\in \ker(g_kX_\lambda - X_\lambda g_k)$. Lemma~\ref{lemma: action of Bk} implies that for all $j\leq k$, $A\mathbf{e}_{w(j)}\in \Span\{\mathbf{e}_{w(1)}, \ldots, \mathbf{e}_{w(k)}\}$ and if $j\geq k$, $A\mathbf{e}_{w(j)}=\mathbf{e}_{w(j)}$.  Thus $A\mathbf{e}_{w(j)}\in  \ker(g_kX_\lambda - X_\lambda g_k)$ so 
\[
(g_kX_\lambda - X_\lambda g_k)A\mathbf{e}_{w(j)} = \mathbf{0} = A(g_kX_\lambda - X_\lambda g_k)\mathbf{e}_{w(j)}
\]
in this case.
\end{proof}

The following proposition records a key property satisfied by flags in $\mathcal{D}_w$.  This statement is the technical heart of the proof of our main theorem below.

\begin{proposition}\label{prop: difference formula} Suppose $R(w)\in \RS(\lambda)$ and $V_\bullet=(\mathbf{v}_1\mid \mathbf{v}_2\mid \cdots \mid \mathbf{v}_n) \in \mathcal{D}_w$ so $V_{\bullet}= g_ng_{n-1}\cdots g_2w(E_\bullet)$ for $g_k\in B_k(w),\,2\leq k \leq n$.  Let $\ell\in [n]$.  Suppose $\ell$ does not label a box at the end of a row in $R(w)$ and let $r=r_\ell$ denote the label of the box directly to the right of $\ell$ in $R(w)$.  Then
\begin{eqnarray} \label{eqn: difference eqn1}
\mathbf{v}_\ell =   X_\lambda \mathbf{v}_r + \sum_{\substack{\ell < t \leq n \\ (t,\ell)\in \inv_\lambda(w)}} x_{w(t)w(\ell)} \mathbf{v}_t.
\end{eqnarray}
We assume the sum appearing above is zero whenever the index set is empty and $x_{w(t)w(\ell)}\in \C$ for $(t, \ell)\in \inv_\lambda(w)$ are coordinates of $g_t\in B_t(w)$ for each $t$.
\end{proposition}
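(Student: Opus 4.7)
The plan is to prove a slightly stronger statement by induction on $k$. Define $\mathbf{v}_j^{(k)} := g_k g_{k-1} \cdots g_2 \mathbf{e}_{w(j)}$ for $k \geq 2$ and $\mathbf{v}_j^{(1)} := \mathbf{e}_{w(j)}$, so that $\mathbf{v}_j = \mathbf{v}_j^{(n)}$. For each $1 \leq k \leq n$, I would show
\[
\mathbf{v}_\ell^{(k)} = X_\lambda \mathbf{v}_r^{(k)} + \sum_{\substack{\ell < t \leq k \\ (t,\ell) \in \inv_\lambda(w)}} x_{w(t)w(\ell)} \mathbf{v}_t^{(k)},
\]
and the desired formula then follows by specializing to $k = n$.

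The base case $k = 1$ is immediate: the sum is empty, and since $r = r_\ell$ means the box labeled $w(\ell)$ in the base filling $R(e)$ lies directly to the left of the box labeled $w(r)$, Definition~\ref{def: fixed element} gives $X_\lambda \mathbf{e}_{w(r)} = \mathbf{e}_{w(\ell)}$, i.e.\ $\mathbf{v}_\ell^{(1)} = X_\lambda \mathbf{v}_r^{(1)}$.

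For the inductive step, apply $g_k$ to both sides of the formula at level $k-1$. The terms in the sum become $\mathbf{v}_t^{(k)}$ directly by definition, so the only work is to analyze $g_k X_\lambda \mathbf{v}_r^{(k-1)}$. Writing $g_k X_\lambda = X_\lambda g_k + (g_k X_\lambda - X_\lambda g_k)$, the first summand gives precisely $X_\lambda \mathbf{v}_r^{(k)}$. Lemma~\ref{lemma: commutator step2} then controls the commutator: since $r = r_\ell$, the vector $(g_k X_\lambda - X_\lambda g_k)\mathbf{v}_r^{(k-1)}$ equals $x_{w(k)w(\ell)} \mathbf{v}_k^{(k-1)}$ when $(k,\ell) \in \inv_\lambda^k(w)$ and vanishes otherwise.

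The remaining bookkeeping step is to absorb this commutator contribution into the sum at index $t = k$. This requires identifying $\mathbf{v}_k^{(k-1)}$ with $\mathbf{v}_k^{(k)}$, which follows from Lemma~\ref{lemma: action of Bk}: each $g_j \in B_j(w)$ with $j \leq k$ fixes $\mathbf{e}_{w(k)}$, so both vectors equal $\mathbf{e}_{w(k)}$. With this identification, the new term (when present) fits precisely into the extended index set at level $k$, completing the induction. The main obstacle is really the commutator computation, but Lemma~\ref{lemma: commutator step2} has been engineered precisely for this purpose and reduces it to a one-line verification; the remaining work is careful indexing to ensure the new summand appears in the right position at each inductive step.
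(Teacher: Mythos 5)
Your proposal is correct and follows essentially the same route as the paper: an induction on $k$ over the partial flags $\mathbf{v}_j^{(k)} = g_k\cdots g_2\mathbf{e}_{w(j)}$, using Lemma~\ref{lemma: commutator step2} to control $g_kX_\lambda - X_\lambda g_k$ on $\mathbf{v}_r^{(k-1)}$ and Lemma~\ref{lemma: action of Bk} to identify $\mathbf{v}_k^{(k-1)}$ with $\mathbf{v}_k^{(k)}$. The only cosmetic difference is that you start the induction at $k=1$ with the trivial identity $X_\lambda\mathbf{e}_{w(r)}=\mathbf{e}_{w(\ell)}$, whereas the paper starts at $k=2$ via Lemma~\ref{lemma: commutator step1}; these are equivalent.
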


\begin{proof}  Let $V_\bullet^1 := w(E_\bullet)$ and 
\[
V_\bullet^k := g_k g_{k-1}\cdots g_2w(E_\bullet) = (\mathbf{v}_1^{(k)} \mid \mathbf{v}_{2}^{(k)} \mid \cdots \mid \mathbf{v}_n^{(k)})
\]
with $\mathbf{v}_j^{(k)}$ as in the statement of Lemma~\ref{lemma: commutator step2}.  By definition, $V_\bullet= V_\bullet^n$ and $\mathbf{v}_k = \mathbf{v}_k^{(n)}$ for all $k\in [n]$.  Note that Lemma~\ref{lemma: action of Bk} implies $\mathbf{v}_k^{(k-1)}= \mathbf{e}_{w(k)} =  \mathbf{v}_k^{(k)}$ for all $k\in [n]$.  

Since $r$ appears in the box directly to the right of $\ell$ in $R(w)$, we know $X_\lambda \mathbf{e}_{w(r)}= \mathbf{e}_{w(\ell)}$. We will show 
\begin{eqnarray} \label{eqn: difference induction}
\mathbf{v}_\ell^{(k)} - X_\lambda \mathbf{v}_r^{(k)} = \sum_{\substack{ \ell< t\leq k\\ (t,\ell)\in \inv_\lambda(w)}} x_{w(t)w(\ell)} \mathbf{v}_t^{(k)}
\end{eqnarray}
for every $2\leq k\leq n$.  This gives us~\eqref{eqn: difference eqn1} when $k=n$, proving the desired result.

We proceed by induction on $k$.  When $k=2$ the formula in Lemma~\ref{lemma: commutator step1} implies 
\[
(g_2X_\lambda -X_\lambda g_2)\mathbf{e}_{w(r)} = \sum_{(2, \ell)\in\inv_\lambda (w)} x_{w(2)w(\ell)}\mathbf{e}_{w(2)}
\]
where the RHS is zero whenever the index set is empty, that is, whenever $(2,\ell)\notin \inv_\lambda^2(w)$.  Applying the identities $\mathbf{v}_\ell^{(2)}= g_2\mathbf{e}_{w(\ell)}$, $\mathbf{v}_r^{(2)}=g_2\mathbf{e}_{w(r)}$, and $\mathbf{e}_{w(2)} = \mathbf{v}_2^{(2)}$ we obtain
\[
\mathbf{v}_{\ell}^{(2)}-X_\lambda \mathbf{v}_{r}^{(2)} =  \sum_{(2, \ell)\in\inv_\lambda (w)} x_{w(2)w(\ell)}\mathbf{v}_2^{(2)}.
\]
as desired. 

Next, assume $k>2$ and~\eqref{eqn: difference induction} holds for $k-1$. Applying $g_k$ to both sides of this equation, we obtain
\begin{equation}\label{eqn: induction step}
\mathbf{v}_\ell^{(k)} - g_kX_\lambda \mathbf{v}_r^{(k-1)} = \sum_{\substack{\ell< t\leq k-1\\ (t, \ell)\in \inv_\lambda(w)}} x_{w(t)w(\ell)}\mathbf{v}_t^{(k)}.
\end{equation}
The formula of Lemma~\ref{lemma: commutator step2} shows that
\[
g_kX_\lambda \mathbf{v}_r^{(k-1)} = \left\{ \begin{array}{ll} X_\lambda g_k\mathbf{v}_r^{(k-1)} + x_{w(k)w(\ell)}\mathbf{v}_k^{(k-1)} & \textup{ if } (k, \ell)\in \inv_\lambda^k(w) \\ X_\lambda g_k\mathbf{v}_r^{(k-1)} & \textup{ otherwise.}  \end{array}\right.
\]
Substitute this formula for $g_kX_\lambda \mathbf{v}_r^{(k-1)}$ into the LHS of~\eqref{eqn: induction step}.  Since $ g_k\mathbf{v}_r^{(k-1)} = \mathbf{v}_r^{(k)}$ and $\mathbf{v}_k^{(k-1)} = \mathbf{v}_k^{(k)}$, this substitution yields the desired result.
\end{proof}

We can now prove our main result.

\begin{theorem}\label{thm: main thm2} Let $h: [n]\to [n]$ be a Hessenberg function such that $h(i)<i$ for all $i$, $\lambda$ be a composition of $n$, and $w\in S_n$ such that $R(w)\in \RS_h(\lambda)$. Suppose $V_\bullet= g_ng_{n-1}\cdots g_2wE_\bullet\in \mathcal{D}_w$ where $g_k\in B_k(w)$ for all $2\leq k\leq n$.  
\begin{enumerate}
\item  We have $V_\bullet \in \Hess(X_\lambda, h)$ if and only if $x_{w(k) w(\ell)}=0$ for all $(k,\ell)\in \inv_\lambda(w)\setminus \inv_{\lambda, h}(w)$, where $x_{w(k)w(\ell)}\in \C$ is a coordinate of $g_k$.
\item The isomorphism of Theorem~\ref{thm: main thm1} restricts to an isomorphism $\C^{d_{w,h}} \rightarrow C_w\cap \Hess(X_\lambda ,h )$ where $d_{w,h} = |\inv_{\lambda, h}(w)|$. 
\end{enumerate}
In particular, there is an affine paving of $\Hess(X_\lambda,h)$ with affine cells obtained by intersecting $\Hess(X_\lambda,h)$ with the Schubert cells.
\end{theorem}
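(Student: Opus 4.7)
The plan is to derive all three parts of the theorem from Proposition~\ref{prop: difference formula} and Theorem~\ref{thm: main thm1}. Part (1) is the technical heart; parts (2) and (3) then follow essentially formally.

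For part (1), I would translate the Hessenberg condition $X_\lambda \mathbf{v}_r \in \Span\{\mathbf{v}_1,\ldots, \mathbf{v}_{h(r)}\}$, which must hold for every $r\in[n]$, into vanishing conditions on the free coordinates of the $g_k$'s. Because $V_\bullet \in \mathcal{D}_w \subseteq \B^{X_\lambda}$ by Corollary~\ref{cor: subset of Springer}, the inclusion $X_\lambda \mathbf{v}_r \in \Span\{\mathbf{v}_1,\ldots,\mathbf{v}_{r-1}\}$ is automatic, so the Hessenberg condition is only nontrivial where $h(r)<r-1$. I would then split into cases according to whether $r$ labels a box in the first column of $R(w)$. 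If so, $X_\lambda \mathbf{v}_r = \mathbf{0}$ (as in the proof of Corollary~\ref{cor: subset of Springer}) and there is nothing to check. Otherwise, letting $\ell$ label the box directly to the left of $r$ in $R(w)$, Proposition~\ref{prop: difference formula} gives
\[
X_\lambda \mathbf{v}_r = \mathbf{v}_\ell - \sum_{\substack{\ell < t \leq n \\ (t,\ell)\in \inv_\lambda(w)}} x_{w(t)w(\ell)} \mathbf{v}_t .
\]
Since $R(w) \in \RS_h(\lambda)$ we have $\ell \leq h(r)$, so $\mathbf{v}_\ell$ already lies in the required span; by the linear independence of $\{\mathbf{v}_1,\ldots,\mathbf{v}_n\}$, the above expression lies in $\Span\{\mathbf{v}_1, \ldots, \mathbf{v}_{h(r)}\}$ if and only if $x_{w(t)w(\ell)} = 0$ for every $(t,\ell) \in \inv_\lambda(w)$ with $t>h(r)$. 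The condition $t > h(r)$ is exactly the negation of clause~(2) of Definition~\ref{defn: Hess inversions}, so these are precisely the pairs in $\inv_\lambda(w)\setminus \inv_{\lambda,h}(w)$. As $r$ varies over $[n]$, the collected constraints enumerate all such pairs, proving (1).

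Part (2) is then immediate from Remark~\ref{rem.paving}: the explicit isomorphism $\mathcal{D}_w \xrightarrow{\sim} \C^{d_w}$ records the full tuple of coordinates of $g_2,\ldots,g_n$, and the vanishing conditions from part (1) cut out the coordinate subspace of dimension $d_{w,h} = |\inv_{\lambda,h}(w)|$. For part (3), since $\Hess(X_\lambda,h)\subseteq \B^{X_\lambda}$, Theorem~\ref{thm: main thm1} yields $C_w\cap \Hess(X_\lambda,h) = \mathcal{D}_w\cap \Hess(X_\lambda,h)$; by Lemma~\ref{lemma: h-strict tab} this is empty unless $R(w)\in \RS_h(\lambda)$, and when nonempty part (2) identifies it with $\C^{d_{w,h}}$. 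Remark~\ref{rem.paveit} then upgrades this cell-by-cell statement to the desired affine paving. The main obstacle is the careful bookkeeping in part (1): correctly matching the coordinate-free Hessenberg condition at each $r$ to a subset of coordinates via Proposition~\ref{prop: difference formula}, and verifying that as $r$ ranges over $[n]$ these subsets cover $\inv_\lambda(w)\setminus \inv_{\lambda,h}(w)$ exactly once, with the first-column case handled separately.
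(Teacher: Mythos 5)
Your proposal is correct and follows essentially the same route as the paper: part (1) via the identity of Proposition~\ref{prop: difference formula} together with linear independence of the $\mathbf{v}_t$ and the observation that $(t,\ell)\in\inv_\lambda(w)\setminus\inv_{\lambda,h}(w)$ exactly when $t>h(r_\ell)$, part (2) via Remark~\ref{rem.paving}, and part (3) via Theorem~\ref{thm: main thm1} and Remark~\ref{rem.paveit}. The extra bookkeeping you flag (first-column case, and that the constraints over all $r$ enumerate the non-Hessenberg inversions exactly once) matches the paper's argument.
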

\begin{proof}  By Lemma~\ref{lemma: action of Bk}, each $g_k\in B_k(w)$ preserves the kernel of $X_\lambda$.  If follows immediately that if $r$ labels a box in the first column of $R(w)$ (that is, if $\mathbf{e}_{w(r)}\in \ker(X_\lambda)$), then $\mathbf{v}_r = g_n g_{n-1}\cdots g_2\mathbf{e}_{w(r)}\in \ker(X_\lambda)$.  Thus, in this case, the condition that  $X_\lambda \mathbf{v}_r \in \Span\{\mathbf{v}_1, \mathbf{v}_2, \ldots, \mathbf{v}_{h(r)}\}$ is vacuously true.

Next, we consider the case in which $r$ is not in the first column of $R(w)$. Rewriting formula~\eqref{eqn: difference eqn1} from Proposition~\ref{prop: difference formula} we have,
\[
X_\lambda \mathbf{v}_r = \mathbf{v}_\ell- \sum_{\substack{\ell<t\leq n\\ (t,\ell)\in \inv_\lambda(w)}} x_{w(t)w(\ell)}\mathbf{v}_t
\]
where $\ell$ is the label of the box immediately to the left of $r$ in $R(w)$.  Since $(t,\ell)\in \inv_\lambda (w)\setminus \inv_{\lambda, h}(w)$ if and only if $t>\ell$ and $h(r)<t< r$ it follows immediately that $X_\lambda \mathbf{v}_r \in \Span\{\mathbf{v}_1, \mathbf{v}_2, \ldots, \mathbf{v}_{h(r)}\}$ if and only if $x_{w(t)w(\ell)}=0$ for all $(t,\ell)\in \inv_\lambda (w)\setminus \inv_{\lambda, h}(w)$. This proves statement (1).

Let $\mathbf{x} = (x_{w(k)w(\ell)})_{(k,\ell) \in \inv_\lambda(w)}$.  By definition, $\widetilde{\varphi}_w(\mathbf{x}) = g_ng_{n-1}\ldots g_2 wE_\bullet$ where $g_k$ has coordinates $(x_{w(k)w(\ell_1)} , \ldots, x_{w(k), w(\ell_d)})$ for $d=d_k$ and $\inv_\lambda^k(w) = \{ (k, \ell_1), \ldots, (k, \ell_d) \}$.  Statement (1) implies $\widetilde{\varphi}_w(\mathbf{x}) \in C_w\cap \Hess(X_\lambda, h) = \mathcal{D}_w\cap \Hess(X_\lambda, h)$ if and only if $x_{(w(k)w(\ell))} = 0$ for all $(k,\ell) \notin \inv_{\lambda, h}(w)$.  Thus, the restriction of $\widetilde{\varphi}_w$ obtained by setting $x_{(w(k)w(\ell))}=0$ for all $(k,\ell) \notin \inv_{\lambda, h}(w)$ yields an isomorphism $\C^{d_{w,h}} \to C_w\cap \Hess(X_\lambda, h)$.  This proves (2) and the final assertion of the theorem follows from Remark~\ref{rem.paveit}.
\end{proof}

\begin{remark} One can also recover the results of Theorem~\ref{thm: main thm2} using similar methods as the second author in \cite{Precup2013}. It is an exercise to show that the formula for $\dim(C_w\cap \Hess(X_\lambda, h))$ given in Proposition 3.7 of \cite{Precup2013} is equal to $|\inv_{\lambda,h}(w)|$.  
\end{remark}

Our next example continues the work of Examples~\ref{ex: induction} and~\ref{ex: Dw set}.

\begin{example}  
Let $n=6$ and $\lambda = (2,2,2)$ and $w=[3,6,2,1,5,4]$.   
Consider the Hessenberg function $h=(0, 0, 1, 1, 3, 4)$.  Then 
\begin{eqnarray*}
\mathcal{D}_w\cap\Hess(X_\lambda, h) &=&  \{(\mathbf{e}_3 + x_{46}\mathbf{e}_1+x_{56}(\mathbf{e}_2+x_{45}\mathbf{e}_1) \mid \mathbf{e}_6 + x_{46}\mathbf{e}_4+x_{56}(\mathbf{e}_5+x_{45}\mathbf{e}_4) + x_{16}\mathbf{e}_1  \cdots\\ 
&& \quad\quad\quad \cdots +x_{26}(\mathbf{e}_2+x_{45}\mathbf{e}_1) \mid  \mathbf{e}_2+x_{45}\mathbf{e}_1 \mid \mathbf{e}_1 \mid \mathbf{e}_5+x_{45}\mathbf{e}_4 \mid \mathbf{e}_4)\}.
\end{eqnarray*}
and 
\begin{eqnarray*}
\mathcal{D}_w &=&  \{(\mathbf{e}_3 + x_{46}\mathbf{e}_1+x_{56}(\mathbf{e}_2+x_{45}\mathbf{e}_1) \mid \mathbf{e}_6 + x_{46}\mathbf{e}_4+x_{56}(\mathbf{e}_5+x_{45}\mathbf{e}_4) + x_{16}\mathbf{e}_1  \cdots\\ 
&& \quad\quad\quad\quad \cdots +x_{26}(\mathbf{e}_2+x_{45}\mathbf{e}_1+x_{12}\mathbf{e}_1) \mid  \mathbf{e}_2+x_{45}\mathbf{e}_1+x_{12}\mathbf{e}_1\mid \mathbf{e}_1 \mid \mathbf{e}_5+x_{45}\mathbf{e}_4 \mid \mathbf{e}_4)\}.
\end{eqnarray*}
Since $(4, 3)\in \inv_\lambda(w)\setminus \inv_{\lambda,h}(w)$ and $(w(4),w(3))= (1,2)$, to get the former paving from the latter we set $x_{12}=0$.
\end{example}


\section{Geometric and Combinatorial Properties} \label{sec.applications}

In this section, we use the affine paving from above to study the geometry of $\Hess(X_\lambda,h)$ using the combinatorics of $h$-strict tableaux.  We prove two main results, generalizing known facts about the Springer fiber to the Hessenberg varieties studied in this paper.  The first is that, when $\lambda$ is a partition and the cell $C_w\cap \Hess(X_\lambda, h)$ has maximal dimension, then $R(w)\in \RS_h(\lambda)$ is a standard tableau.  Our second result proves that $\Hess(X_\lambda,h)$ is connected.  

Assume for now that $\lambda$ is a partition.  If $R$ is a row-strict tableau of shape $\lambda$, we let $\std(R)$ denote the standard tableau obtained by reordering the entries in each column so that they increase from top to bottom.  The next result generalizes Theorem 3.5 in \cite{Precup-Tymoczko2019}. 

\begin{lemma} \label{std} Let $\lambda$ be a partition of $n$.
If $R\in \RS_h(\lambda)$ then $\std(R)\in \RS_h(\lambda)$.  
\end{lemma}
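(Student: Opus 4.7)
The plan is to analyze how sorting each column affects the $h$-strict condition, column-pair by column-pair. Since standardization acts independently on each column, it suffices to fix a pair of adjacent columns $j, j+1$ of $\lambda$ and verify that in $\std(R)$ every horizontally adjacent pair satisfies the required inequality. Let $m'$ be the number of rows for which column $j+1$ is present; since $\lambda$ is a partition, column $j$ has at least $m'$ rows. Writing the sorted entries of column $j$ of $R$ as $a_1 < a_2 < \cdots$ and those of column $j+1$ as $b_1 < \cdots < b_{m'}$, I need to check $a_i \le h(b_i)$ for $1 \le i \le m'$.

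To do this, I would let $\alpha_k$ and $\beta_k$ denote the entries of $R$ at row $k$ of columns $j$ and $j+1$ respectively, for $k = 1, \ldots, m'$. The $h$-strictness of $R$ gives $\alpha_k \le h(\beta_k)$ for each such $k$. Fix $i$ and consider the set $A = \{k \in \{1,\ldots,m'\} : \beta_k \le b_i\}$; this set has size exactly $i$ because $b_i$ is the $i$-th smallest of the $\beta_k$. For every $k \in A$, the weak monotonicity of $h$ yields $\alpha_k \le h(\beta_k) \le h(b_i)$.

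The final step is a pigeon count: $\{\alpha_k : k \in A\}$ consists of $i$ distinct entries of column $j$ of $R$, each bounded above by $h(b_i)$. Since $a_i$ is by definition the $i$-th smallest entry of the full column $j$, and we have located $i$ such entries already below $h(b_i)$, we must have $a_i \le h(b_i)$. This gives the $h$-strict condition at row $i$ of columns $j, j+1$ in $\std(R)$, completing the argument.

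I do not anticipate a substantive obstacle here; the proof uses only the monotonicity of $h$ and the $h$-strictness of $R$. The only care needed is in the setup step, namely observing that for a partition shape the columns are top-aligned so that rows $1,\ldots,m'$ really do contain matched pairs $(\alpha_k, \beta_k)$ across columns $j$ and $j+1$ — a simplification the authors have already made by reducing to $\lambda \vdash n$ in the paragraph preceding the lemma.
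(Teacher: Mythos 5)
Your proof is correct and follows essentially the same argument as the paper: both identify the $i$ smallest entries of the right column, use $h$-strictness of $R$ plus monotonicity of $h$ to bound their left-neighbors by $h$ of the $i$-th smallest right-column entry, and conclude by an order-statistic (pigeonhole) count that the $i$-th smallest left-column entry satisfies the required inequality. Your write-up is just a more detailed version, with the top-alignment of columns for partition shape made explicit.
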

\begin{proof}
Suppose $a_i$ is the entry in row $i$ and column $k>1$ of $\std(R)$.  Then $a_i$ is greater than precisely $i-1$ other entries of the $k$-th column in $R$. Since $R\in \RS_h(\lambda)$, $h(a_i)$ is greater than or equal to at least $i$ distinct entries in column $k-1$.  Thus $h(a_i)$ is greater than or equal to the entry in the box to the immediate left of $a_i$ in $\std(R)$. 
This implies that $\std(R)\in\RS_h(\lambda)$.
\end{proof}

We now prove the first of the main results in this section.  In general, the varieties $\Hess(X_\lambda,h)$ need not be equidimensional, but the following theorem shows that the maximal dimension cells in our affine paving correspond to standard tableaux.  First we require the following definition.

\begin{definition}
Suppose $R\in \RS_h(\lambda)$ has $m$ columns.  Let $d_R(i, j)$ for $1\le i\le j\le m$ be the number of Hessenberg inversions $(k, \ell)$ with $k$ in column $i$ and $\ell$ in column $j$ of $R$. 
\end{definition}
Note that in the definition above, $i$ and $j$ may be equal.  In the proof of the theorem below, we will consider a process that alters the entires of a tableau $R$.  We use the notation $d(i, j)$ to refer to the number of Hessenberg inversions in columns $i$ and $j$ at each step of the process in our proof.

\begin{theorem}\label{thm.maximalcells} If $w\in S_n$ such that $\dim(C_w\cap \Hess(X_\lambda,h)) = |\inv_{\lambda, h}(w)|$ is maximal, then $R(w)$ is a standard tableau.  
\end{theorem}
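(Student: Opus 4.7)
My approach is to prove the contrapositive: if $R(w)$ is not standard, then $|\inv_{\lambda,h}(w)|$ is not maximal. By Lemma~\ref{std}, the standardization $\std(R(w))$ lies in $\RS_h(\lambda)$; let $w'$ denote the permutation with $R(w') = \std(R(w))$, which gives another nonempty affine cell $C_{w'} \cap \Hess(X_\lambda,h)$ by Theorem~\ref{thm: main thm2}. It therefore suffices to show $|\inv_{\lambda,h}(R(w))| < |\inv_{\lambda,h}(\std(R(w)))|$ whenever $R(w)$ is not standard. I would compare these counts by transforming $R(w)$ into $\std(R(w))$ through a finite sequence of intra-column adjacent transpositions $R(w) = R^{(0)}, R^{(1)}, \ldots, R^{(N)} = \std(R(w))$. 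Since Definition~\ref{defn: Hess inversions} extends to arbitrary fillings of $\lambda$, the count $|\inv_{\lambda,h}|$ and each $d(i,j)$ remain well-defined at every step, even if an intermediate tableau fails row- or $h$-strictness.

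For each swap exchanging $a$ at $(p,j)$ with $b$ at $(p+1,j)$ (with $a > b$), I would partition the pairs $(k, \ell)$ with $k > \ell$ into four cases: (I) neither $k$ nor $\ell$ is $a$ or $b$, and $\ell$ is not at $(p, j-1)$ or $(p+1, j-1)$, in which case the pair is unaffected; (II) neither is $a$ or $b$, but $\ell$ sits at $(p, j-1)$ or $(p+1, j-1)$, so the right-neighbor $r$ of $\ell$ changes between $a$ and $b$, with condition (2) weakened at one row and strengthened at the other via the monotonicity of $h$; (III) exactly one of $k, \ell$ is $a$ or $b$, so the row of that element shifts by one, changing condition (1); and (IV) $(k, \ell) = (a, b)$, which fails condition (1) before the swap and satisfies it after, possibly contributing a new Hessenberg inversion subject to condition (2). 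Careful bookkeeping through these cases, exploiting $h$-strictness of $R(w)$ and the row-strict relations inherited by the intermediate tableaux, should produce a nonnegative net change $|\inv_{\lambda,h}(R^{(t+1)})| - |\inv_{\lambda,h}(R^{(t)})| \geq 0$ for each step, with strict inequality at least once.

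The main obstacle I anticipate is the case (II) analysis and establishing strict positivity somewhere along the path. I would attempt to handle this by choosing each swap carefully, taking $j$ to be the leftmost column where $R^{(t)}$ differs from $\std(R(w))$ and $(p, p+1)$ the topmost inversion in column $j$, so that column $j-1$ is already in standard order, giving $R^{(t)}[p, j-1] < R^{(t)}[p+1, j-1]$. Under this choice, the potential losses in case (II) at row $p$ should be outweighed by the corresponding gains at row $p+1$ combined with the gains from cases (III) and (IV), since the strict inequality $\ell_1 < \ell_2$ in column $j-1$ makes the gain-side condition $k > \ell_2$ uniformly harder to fail than the loss-side condition $k > \ell_1$ for any $k \in (h(b), h(a)]$. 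For strict positivity, the first swap out of $R(w) \in \RS_h(\lambda)$ should produce a genuinely new Hessenberg inversion via case (IV) or case (III), thanks to $h$-strictness of $R(w)$. Iterating until $\std(R(w))$ is reached then yields $|\inv_{\lambda,h}(R(w))| < |\inv_{\lambda,h}(\std(R(w)))|$, contradicting the maximality assumption.
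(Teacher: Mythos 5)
Your high-level plan (standardize $R(w)$ column by column and compare Hessenberg-inversion counts, using Lemma~\ref{std} to know the target is $h$-strict) is the same as the paper's, but the step your argument rests on is false: a plain intra-column adjacent transposition, even chosen by your rule (leftmost column differing from $\std(R(w))$, topmost inversion in that column), can strictly decrease the total count $|\inv_{\lambda,h}|$. Concretely, take $n=8$, $\lambda=(2,2,2,2)$, $h=(0,1,1,2,3,4,4,5)$, and let $R(w)$ have rows, from top to bottom, $(5,8),\,(1,2),\,(3,6),\,(4,7)$; this filling is row-strict and $h$-strict. Your process sorts column one first: the first swap turns column one from $(5,1,3,4)$ into $(1,5,3,4)$, and the prescribed next swap exchanges $5$ and $3$ in rows $2$--$3$. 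At that step the pair $(4,3)$ is destroyed, because the right neighbor of $3$ changes from $6$, with $4\le h(6)=4$, to $2$, with $4>h(2)=1$; the hoped-for new pair $(5,3)$ from your case (IV) fails condition (2) since $5>h(2)=1$, and all remaining changes cancel, so the total drops by one. (The inequality you ultimately want is still true in this example -- the count for $\std(R(w))$ exceeds that of $R(w)$ -- but your step-by-step monotonicity cannot establish it.) The structural reason is the one you flagged but did not resolve: after earlier swaps the intermediate tableau is no longer $h$-strict along rows, so the inequality $a\le h(r_p)$ which rescues the first swap is unavailable later, and losses of the form $(k,b)$ with $b<k<a$, or of the form $(k,\ell_1)$ at column $j-1$, need not be compensated. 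Your strictness claim at the first swap is also fragile, since the gain $(a,b)$ can be offset by the loss of $(\ell_2,\ell_1)$ in column $j-1$.

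The paper's proof is designed precisely to avoid this. It never tracks the total count across single-entry swaps; instead it fixes a pair of columns $(i,j)$ and compares only $d(i,j)$, using a sorting process on columns $i$, $j$, $j+1$ that swaps entire row segments of those columns simultaneously. Because the condition-(2) data (the right neighbor) travels with each entry, sorting columns $i$ and $j$ cannot lose any pair counted by $d(i,j)$; only the final pass on column $j+1$ changes right neighbors, and there each lost pair $(k,s)$ is matched with a gained pair $(k,\ell)$. Strictness is then extracted not from the first swap but from the last unsorted column $k$, where the column to its right is already increasing, so the first swap with $i=j=k$ produces a gain in $d(k,k)$ with no compensating loss. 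If you want to salvage your approach you would have to aggregate the changes over the whole sort of a column rather than swap by swap, which in effect reproduces the paper's column-pair bookkeeping.
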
 

\begin{proof} Suppose $R\in \RS_h(\lambda)$ is not a standard tableau.  Write $R=R(y)$ and $\std(R)=R(w)$ for some $y,w\in S_n$.  We claim that $d_R(i, j) \leq d_{\std(R)}(i, j)$ for all pairs $(i, j)$, and furthermore, at least one inequality is strict.  This implies $|\inv_{\lambda, h}(y)|< |\inv_{\lambda,h}(w)|$, proving the theorem.   

We prove this claim by describing a process which, when applied to any three columns $i, j, j+1$, will sort them.  Furthermore, each step in this process will not increase the value of $d(i, j)$. (If $i=j$, then there are only two columns.)  This will imply that $d_R(i, j) \leq d_{\std(R)}(i, j)$.     

We now describe this process; the idea is to `bubble sort columns'.  If there is an inversion between two adjacent boxes in column $i$, we exchange them as well as the entries in that same row for column $j$ and column $j+1$.  We consider all rows to have the same number of squares and count blank squares as having a value of $+\infty$.  Note that this means that, in the middle of this process, we may end up with a composition of $n$ that is not a partition.  Continue this until column $i$ is increasing.  Then do this for column $j$; bubble sort out the inversions while simultaneously exchanging the corresponding entries in column $j+1$.  Finally, bubble sort column $j+1$.  This results in the entries of columns $i$, $j$, and $j+1$ being reordered so that they increase from top to bottom (i.e., we obtain the corresponding columns of $\std(R)$).

First we analyze what happens when $i=j$.  Our goal is to prove that this process does not increase the number of Hessenberg inversions $(k,\ell)$ with $k$ and $\ell$ in column $i$.  In this case, any such pair $(k, \ell)$ will still be a pair after sorting column $i$ because $k>\ell$ so $k$ will still label a box below $\ell$ in column $i$, and the number in the box to the right of $\ell$ does not change.  Note that sorting column $i$ does not change the fact that columns $i$ and $i+1$ are $h$-strict.

Now consider sorting column $j+1=i+1$.  Suppose $\ell>s$ are adjacent entries in column $i$ (so $\ell$ appears directly below $s$) and that we swap $r_\ell$ and $r_s$ (the entries to the right of $\ell$ and $s$, respectively) as we sort column $i+1$, so $r_\ell<r_s$.  We first prove that the $h$-strictness of columns $i$ and $i+1$ is preserved.  Indeed, if $s\leq h(r_s)$ and $\ell\leq h(r_\ell)$ then 
\[
\ell\leq h(r_\ell) \leq h(r_s)  \textup{ and } s<\ell \leq h(r_\ell)
\]
since $r_\ell<r_s$ implies $h(r_\ell)\leq h(r_s)$.  Now, if we lose any pair counted by $d(i, j)$, it must be of the form $(k, s)$, where $k>s$ appears below $s$ in column $i$ and $h(r_\ell)< k\leq h(r_s)$. Note that $k\neq\ell$ in this case, since $\ell\leq h(r_s)$. Since $k>s$ and $\ell$ appears immediately below $s$, we get that $k>\ell$ labels a box below $\ell$ in column $i$.  Since $k$ is below $\ell$ and $r_s$ now labels the box directly to the right of $\ell$ after the swap, we gain the Hessenberg inversion $(k, \ell)$.  Thus $d(i, j)$ does not decrease.

Now suppose $i < j$ and consider the operations on column $i$ first.  Because every move swaps entire rows, bubble sorting column $i$ will not result in the loss of any Hessenberg inversions, and the columns $j$ and $j+1$ remain $h$-strict.  Now consider sorting column $j$.  This clearly does not affect the inversions in $d(i,j)$ for the same reason, namely the columns $j$ and $j+1$ are being simultaneously swapped.  Columns $j$ and $j+1$ remain $h$-strict in this case also.   

Finally, consider the operations on column $j+1$.  Say $s$ is directly above $\ell$ in column $j$ (so $s<\ell$) and $r_s>r_\ell$, as above. Swapping $r_s$ and $r_\ell$ preserves the $h$-strictness of columns $j$ and $j+1$; the argument is the same as above. If we lose any Hessenberg inversion $(k,s)$ where $k$ appears in column $i$, then we must have $h(r_\ell)<k\leq h(r_s)$, just as above. But we gain the pair $(k,\ell)$ after swapping $r_s$ and $r_\ell$ since $\ell \leq h(r_\ell)<k$ and $k\leq h(r_s)$.  Once again, we see that $d(i,j)$ does not decrease.

Now we show that $d_{\std(R)}(i, j)>d_R(i, j)$ for some $(i, j)$.  Let column $k$ be the last column of $R$ whose entries are not already increasing from top to bottom.  Consider the first instance the bubble sorting process swaps two elements when $i=j=k$.  Say we swap $\ell$ and $s$ with $\ell>s$.  Since column $j+1$ begins sorted, we have $r_\ell<r_s$.  Now $\ell \le h(r_\ell)$, so we gain the Hessenberg inversion $(\ell, s)$.  Moreover, as we showed in the general case where $i=j$, we do not lose any pairs counted by $d(i,i)$ when sorting column $i$, and every pair we lose when sorting column $i+1$ matches with one we gain upon doing so.  Thus $d_{\std(R)}(k, k)>d_R(k, k)$, as desired.
\end{proof}

We demonstrate the algorithm in the previous proof with an example.

\begin{example}
Let $n=12$ and $h: [12]\to [12]$ the Hessenberg function defined by $h(i) = \max\{0, i-2\}$.  Consider beginning with the following $h$-strict tableaux $R$ and $\std(R)$:
\[
R =\begin{ytableau}
2&4&8&10\\
1&5&7&11\\
3&9&12\\
6
\end{ytableau}\quad\longrightarrow\quad
\std(R) = \begin{ytableau}
1&4&7&10\\
2&5&8&11\\
3&9&12\\
6
\end{ytableau}
\]
When $i=j=1$, we check that $d_R(i,j)$ does not decrease during the operations described in the proof. Each step of the process is displayed below; in this case there are only two steps.
\[
\begin{ytableau}
2&4\\
1&5\\
3&9\\
6
\end{ytableau}\quad\longrightarrow\quad
\begin{ytableau}
1&5\\
2&4\\
3&9\\
6
\end{ytableau}\quad\longrightarrow\quad
\begin{ytableau}
1&4\\
2&5\\
3&9\\
6
\end{ytableau}
\]
We see $d_R(1,1)=2\leq 3=d_{\std(R)}(1,1)$, as desired.  Next, when $i=1$ and $j=2$:
\[
\begin{ytableau}
2&4&8\\
1&5&7\\
3&9&12\\
6
\end{ytableau}\quad\longrightarrow\quad
\begin{ytableau}
1&5&7\\
2&4&8\\
3&9&12\\
6
\end{ytableau}\quad\longrightarrow\quad
\begin{ytableau}
1&4&8\\
2&5&7\\
3&9&12\\
6
\end{ytableau}\quad\longrightarrow\quad
\begin{ytableau}
1&4&7\\
2&5&8\\
3&9&12\\
6
\end{ytableau}
\]
and we again have $d_R(1,2)=1\leq 1 = d_{\std(R)}(1,2)$.  Since column 3 is the last column of $R$ in which the values do not increase from top to bottom, we check that when $i=j=3$, $d_R(i,j)<d_{\std(R)}(i,j)$. 
\[
\begin{ytableau}
8&10\\
7&11\\
12\\
\end{ytableau}\quad\longrightarrow\quad
\begin{ytableau}
7&10\\
8&11\\
12\\
\end{ytableau}
\]
 Indeed, $d_R(3,3)=0$ and $d_{\std}(3,3)=1$.
\end{example}

We now return to the setting in which $\lambda$ is any composition of $n$.  We prove that each Hessenberg variety $\Hess(X_\lambda, h)$ is connected. Recall that the dimension of the $0$-cohomology group of any algebraic variety is equal to the number of connected components of that space.  Thus, by Lemma~\ref{betti} and Theorem~\ref{thm: main thm2} it suffices to show that there is a unique permutation $w\in S_n$ such that $\dim(C_w\cap \Hess(X_\lambda, h))=0$.

\begin{theorem}\label{thm.connected} For any Hessenberg function $h\in \mathcal{H}$ and composition $\lambda$, the Hessenberg variety $\Hess(X_\lambda, h)$ is connected whenever it is nonempty.
\end{theorem}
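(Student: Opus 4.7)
The plan is to combine the affine paving of Theorem~\ref{thm: main thm2} with Lemma~\ref{betti}. Applied to $\Hess(X_\lambda,h)$, this identifies $\dim H^0_c(\Hess(X_\lambda,h))$, which equals the number of connected components, with
\[
n_0 := |\{w \in S_n : R(w) \in \RS_h(\lambda) \textup{ and } \inv_{\lambda,h}(w) = \emptyset\}|,
\]
the number of $0$-dimensional cells in the paving. Thus the theorem is equivalent to showing $n_0 = 1$ whenever $\Hess(X_\lambda, h)$ is nonempty.

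Existence ($n_0 \geq 1$) follows from projectivity. The variety $\Hess(X_\lambda, h)$ is closed in the flag variety, hence compact. In the filtration $Y_0 \subseteq Y_1 \subseteq \cdots \subseteq Y_d = \Hess(X_\lambda,h)$ provided by Theorem~\ref{thm: main thm2}, each $Y_i$ is closed, and therefore compact. If $Y_0$ were empty, the smallest nonempty $Y_k$ would equal $Y_k \setminus Y_{k-1}$, a finite disjoint union of affine spaces of strictly positive dimension; such a union is not compact, a contradiction. Hence $Y_0 \neq \emptyset$, producing at least one $0$-cell.

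For uniqueness ($n_0 \leq 1$) I would induct on $n$, showing that for any $R(w) \in \RS_h(\lambda)$ with $\inv_{\lambda,h}(w) = \emptyset$ the entries of $R(w)$ are all forced. The key observation is that the entry $n$ is pinned in place. Since $R(w)$ is row-strict, $n$ sits at some row end $(r, \lambda_r)$; for any other row end $\ell$ at $(i, \lambda_i)$ with $i \neq r$, condition (2) of Definition~\ref{defn: Hess inversions} is vacuously satisfied, so $(n, \ell)$ is automatically a Hessenberg inversion whenever $\lambda_r < \lambda_i$, or $\lambda_r = \lambda_i$ and $r > i$. Eliminating all such inversions forces $\lambda_r$ to be the maximum of $\lambda$ and $r$ to be the smallest row index attaining it. Because $\lambda$ is a partition, this pins $n$ at $(1, \lambda_1)$.

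Deleting $n$'s box yields a row-strict tableau of composition shape $\lambda^{(1)} = (\lambda_1 - 1, \lambda_2, \ldots, \lambda_k)$, in which the entry $\ell_0$ formerly adjacent to $n$ becomes a new row end. To run the induction I would strengthen the hypothesis to allow composition shapes together with an inherited Hessenberg bound at the created row end: replace the usual vacuous condition (2) at $\ell_0$ by the requirement $k \leq h(n)$, recording the constraint previously supplied by the removed box. Under this generalization the reduced tableau has no (generalized) Hessenberg inversions, so the inductive hypothesis determines it uniquely, with $n = 1$ as the trivial base case. The main obstacle will be executing this generalization carefully: the forced-placement argument for the largest entry must still single out a unique row when the shape is a composition with several candidate maximal-length rows, and the inherited bounds must propagate coherently each time the current largest entry is removed from an already marked row end.
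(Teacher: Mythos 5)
Your reduction of the theorem to showing that there is at most one $h$-strict tableau with no Hessenberg inversions is exactly the paper's starting point, and your pinning of $n$ at the end of the first row is correct (it agrees with the top-right placement of $n$ in the tableau $R_0$ the paper constructs). The existence half is also not where the work lies: for a nonempty projective variety, Lemma~\ref{betti} and compactness give $H^0_c = H^0 \neq 0$, so $n_0 \geq 1$ is automatic; your point-set argument (``a finite disjoint union of positive-dimensional affine spaces is not compact'') is not literally valid, since the cells of $Y_k$ do not carry the disjoint-union topology, but that is a repairable side issue.

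The genuine gap is the uniqueness step, which you only outline and which is the heart of the theorem. The strengthened inductive statement you propose --- composition shapes with created row ends at which condition (2) of Definition~\ref{defn: Hess inversions} is replaced by the inherited bound ``$k \leq h(n)$'' --- is false as stated. Take two rows consisting of a single box each, both ends marked with inherited bound $0$, and entries $\{1,2\}$: placing $2$ in the top row creates no generalized inversion because condition (1) fails for $(2,1)$, and placing $2$ in the bottom row creates none either because $2 \nleq 0$; so two distinct inversion-free fillings exist and uniqueness fails. What is missing is that a created row end must also carry the $h$-strictness cap supplied by the deleted box (the entry occupying that end must be at most $h$ of the deleted entry). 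If bound and cap are carried together, a pinning argument can plausibly be pushed through: any end able to host the largest entry $N$ has bound at least $N$, and for any two candidate ends condition (1) holds in at least one direction, so at most one placement is inversion-free; but none of this bookkeeping is formulated or verified in your write-up --- you explicitly defer it. For comparison, the paper avoids the induction and the generalization entirely: it constructs $R_0$ greedily, filling columns from right to left and top to bottom with the largest available entry consistent with $h$-strictness, checks directly that $R_0$ has no Hessenberg inversions, and then shows that any other $h$-strict tableau acquires a Hessenberg inversion $(k,\ell)$ at the first box (in that scan order) where it differs from $R_0$, since the entry $k$ that $R_0$ places there satisfies $k \leq h(r_\ell)$ and sits in a later box of the other tableau.
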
 

\begin{proof}
By Remark~\ref{rem.conjugation}, we may assume without loss of generality that $\lambda$ is a partition of $n$.  It suffices to prove that there exists a unique $h$-strict tableau that has no Hessenberg dimension pairs. Consider the following algorithm.
Begin at the right-most column of $\lambda$.  Label the boxes in this column from top to bottom, assigning $n$ to the first box and decreasing by one at a time.  Then move to the next right-most column, filling the boxes from top to bottom with the largest available number subject to the constraint that doing so does not violate $h$-strictness.  Continue in this way and denote the resulting tableau by $R_0$.  We first show that this algorithm results in an $h$-strict tableau whenever $\Hess(X_\lambda, h)\neq \emptyset$; then we will show that $R_0$ is the unique tableaux satisfying the desired properties.

If $\Hess(X_\lambda, h)$ is nonempty, then at least one intersection $C_w\cap \Hess(X_\lambda, h)$ must be nonempty.  Let $R=R(w)$ be the corresponding $h$-strict tableau.  We show that $R$ can be transformed into an $h$-strict tableau obtained via the algorithm above by a sequence of swaps.  Given $\ell\in [n]$, recall that we denote by $r_\ell$ the label of the box directly to the right of $\ell$, if it exists.

Going in order from top to bottom in each column, starting the right-most column and moving left, take the first square in $R$ whose label differs from that dictated by the algorithm. 
In other words, we consider the first square in $R$ with label $\ell$ such that there exists $k>\ell$ with $k\leq r_\ell$ and such that $k$ appears below the box containing $\ell$ and in the same column or in any column strictly to the left of the column containing $\ell$.  We may assume $k$ is the maximal with respect to these properties.  Let $\ell_m$ and $k_m$ be the values $m$ boxes to the left of $\ell$ and $k$ in tableau $R$, respectively.  

Define $R'$ to be the following $h$-strict tableau obtained from $R$.  Exchange the positions of $k$ and $\ell$ in $R$.  If this is an $h$-strict tableau, then stop.  Otherwise, the only problem that can arise is that $k_1 > h(\ell)$.  In that case, we must also have $k_1>\ell_1$. Now exchange the positions of $k_1$ and $\ell_1$; if this results in an $h$-strict tableau then stop.  Otherwise note that swapping $k_1$ into its new row does not change the fact that that row is $h$-strict.  The only problem that can occur is if $k_2 > h(\ell_1)$ (and therefore $k_2>\ell_2$).  Continue this process of swapping until the tableau is $h$-strict.  Indeed, because $k$ originally appeared in a `later' box than $\ell$, we will always be able to perform this swap until the entire tableau is $h$-strict. 
The resulting $R'$ is $h$-strict and has labels as dictated by the algorithm up through the box which now contains $k$.

Repeating the operation of the previous two paragraphs yields an $h$-strict tableau whose entries are dictated by the algorithm.  This shows that the algorithm does indeed produce a completed $h$-strict tableau, $R_0$.
Now consider any pair $(k, \ell)$ with $k>\ell$ such that $k$ labels a box in $R_0$ below $\ell$ and in the same column or in any column to the left of the column containing $\ell$.  By construction, placing $k$ in the box containing $\ell$ would lead to a violation of $h$-strictness, i.e., $k > h(r_\ell)$.  This proves that $R_0$ has no Hessenberg inversions.

Finally, we show there are no other $h$-strict tableau $R(w)$ with $|\inv_{\lambda,h}(w)|=0$.  Suppose $R(w)=R\in \RS_h(\lambda)$ such that $R\neq R_0$ and that in the first box (using the same ordering as above) in which $R$ differs from $R_0$, $R$ is labeled by $\ell$ rather than a $k$.  Then $k>\ell$ since our algorithm for constructing $R_0$ always chooses the largest available number.  The value $k$ must be placed in a `later' box of $R$ than the box containing $\ell$, so $(k, \ell)$ is an inversion of $w$.  Since $k$ could have been placed in the box containing $\ell$ (as it is in $R_0$), we have that $k\le h(r_\ell)$, so $(k, \ell)$ is a Hessenberg inversion pair.  This shows that all other $h$-strict tableaux have at least one Hessenberg inversion pair.  Thus $R_0$ is the unique $h$-strict tableau with the desired properties. 
\end{proof}

In the Springer fiber case, the unique row-strict tableau without any Springer inversions is the base filling from Definition~\ref{def: fixed element} above.  However, in general the base filling may not be $h$-strict, as the next example demonstrates.

\begin{example}
Let $n=12$ and $h:[12]\to [12]$ the Hessenberg function defined by $h(i) = \max\{0, i-3\}$.  The base filling for $\lambda = (4,4,3,1)$ and the $h$-strict tableau $R_0$ constructed in the proof of Theorem~\ref{thm.connected} are displayed below.
\[
R(e) = \begin{ytableau}
4 & 7 & 10 & 12 \\
3 & 6 & 9 & 11 \\
2 & 5 & 8 \\
1
\end{ytableau}
\quad\quad\quad
R_0 = \begin{ytableau}
3 & 6 & 9 & 12 \\
2 & 5 & 8 & 11 \\
4 & 7 & 10 \\
1
\end{ytableau}
\]
Note that $R(e)$ is not $h$-strict, since $10$ and $12$ appear as consecutive entries in the first row, but $10\nleq 9=h(12)$.  The unique row-strict tableau with no Hessenberg inversions is $R_0$.
\end{example}

Finally, we conclude with an example which shows that analogous Hessenberg varieties defined for other classical groups need not be connected.  

\begin{example}\label{ex.TypeC} Consider the symplectic group $SP_{4}(\C)$.  Our convention is that the inner product on $\C^4=\mathrm{span}\{\mathbf{e}_1, \mathbf{e}_2,\mathbf{e}_3, \mathbf{e}_4\}$ is determined by the following matrix
\[
s_J = \left[ \begin{array}{c|c}0 & J \\ \hline -J & 0\end{array}\right] \textup{ where } J=\begin{bmatrix}0 & 1\\ 1 & 0\end{bmatrix}.
\]
In other words, given $g\in GL_4(\C)$ we have $g\in SP_4(\C)$ if and only if $g^{T}s_Jg=s_J$ where $g^{T}$ denotes the transpose of $g$.  The symplectic group has Lie algebra $\mathfrak{sp}_{4}(\C)=\{X\in \mathfrak{gl}_4(\C)\mid s_JX=-X^{T}s_J \}$. 

The flag variety of $SP_4(\C)$ consists of full flags of isotropic subspaces.  Let
\[
X=\begin{bmatrix} 0 & 0 & 1 & 0\\ 0 & 0 & 0 & 1\\ 0 & 0 & 0 & 0\\ 0 & 0 & 0 & 0 \end{bmatrix}\in \mathfrak{sp}_4(\C)
\] 
and  $h$ be the Hessenberg function $(0,1,1,3)$.  This Hessenberg function gives a well-defined subvariety of the flag variety for $SP_4(\C)$ using the Hessenberg space $H=H(h)\cap \mathfrak{sp}_4(\C)$; see, for example, \cite[Section 2.2]{Precup2013} for the definition of the Hessenberg variety in an arbitrary flag variety. The variety of isotropic flags $V_\bullet=(\mathbf{v}_1\mid \mathbf{v}_2\mid\mathbf{v}_3\mid \mathbf{v}_4)$ such that:
\begin{eqnarray}\label{eqn: type C}
X\mathbf{v}_1=\mathbf{0};\;\; X\mathbf{v}_2, X\mathbf{v}_3\in V_1=\Span\{\mathbf{v}_1\};\;\textup{ and }\; X\mathbf{v}_4\in V_3=\Span\{\mathbf{v}_1, \mathbf{v}_2, \mathbf{v}_3\}
\end{eqnarray}
is the Hessenberg variety $\Hess(X,H)$ in the flag variety of $SP_4(\C)$. It is a subvariety of the Springer fiber corresponding to $X$.  In this small dimensional case, it is easy to show that $\Hess(X,H)$ is paved by affines, and this paving is obtained by intersecting $\Hess(X,H)$ with each Schubert cell.  Let $s_1$ be the element of the Weyl group of $SP_4(\C)$ such that $s_1\mathbf{e}_1=\mathbf{e}_2$, $s_1\mathbf{e}_2=\mathbf{e}_1$, $s_1\mathbf{e}_3=\mathbf{e}_4$, $s_1\mathbf{e}_4=\mathbf{e}_3$, and let $e$ denote the identity element.  The Schubert cell $C_{s_1}$ consists of all flags of the form:
\[
(\mathbf{e}_2+c\,\mathbf{e}_1 \mid \mathbf{e}_1\mid \mathbf{e}_4-c\,\mathbf{e}_3\mid \mathbf{e}_3) \textup{ for some } c\in \C
\]
and $C_e=\{E_\bullet\}$.  The conditions from~\eqref{eqn: type C} now imply:
\[
C_e\cap \Hess(X,H) = \{E_\bullet\}\;\; \textup{ and } \;\; C_{s_1}\cap \Hess(X,H) = \{s_1E_\bullet\}
\]
so $\dim(C_e\cap \Hess(X,H))=\dim(C_{s_1}\cap \Hess(X,H))=0$. Since the $0$-cohomology group of $\Hess(X,H)$ has dimension 2, $\Hess(X,H)$ is not connected.

\end{example}


\end{document}